\documentclass{amsart}

\usepackage{amsmath,amssymb,amscd,amsthm,latexsym,accents,stmaryrd}
\usepackage{amsfonts}
\usepackage[latin1]{inputenc}
\usepackage{graphicx}
\usepackage{float}
\usepackage{subfigure}
\usepackage[all]{xy}
\usepackage{enumerate}
\usepackage{amsaddr}
\usepackage{tikz}
\usetikzlibrary{automata,positioning}


\newtheorem{theor}{Theorem}[subsection]
\newtheorem{lem}[theor]{Lemma}
\newtheorem{prop}[theor]{Proposition}
\newtheorem{cor}[theor]{Corollary}
\theoremstyle{definition}
\newtheorem{defin}[theor]{Definition}
\theoremstyle{remark}
\newtheorem*{rem}{Remark}
\newtheorem*{rems}{Remarks}

\newtheorem*{exs}{Examples}

\newcommand{\dem}{\noindent \emph{Proof. }}
\newcommand{\findem}{\hfill $\Box$}

\newdir{ >}{{}*!/-7pt/\dir{>}}

\def\R{{\mathbb{R}}}

\def\im{{\rm{im}\,}}

\def\length{{\rm{length}}}
\def\A{{\mathcal{A}}}
\def\B{{\mathcal{B}}}

\def\deg{{\rm{deg}}}

\begin{document}

\title{The homology graph of a higher dimensional automaton}

\author{Thomas Kahl}

\address{Centro de Matem\'atica,
Universidade do Minho, Campus de Gualtar, \\
4710-057 Braga,
Portugal
}

\email{kahl@math.uminho.pt
}

\thanks{This research has been supported by FEDER funds through ``Programa Operacional Factores de Competitividade - COMPETE'' and by FCT -  \emph{Fundação para a Ciência e a Tecnologia} through projects Est-C/MAT/UI0013/2011 and PTDC/MAT/0938317/2008.}

\subjclass[2010]{55N99, 55U10, 68Q45, 68Q85}

\keywords{Higher dimensional automata, precubical set, homology graph, directed homology, homeomorphic abstraction}


\begin{abstract}
Higher dimensional automata, i.e. labelled precubical sets, model concurrent systems. We introduce the homology graph of an HDA, which is a directed graph whose nodes are the homology classes of the HDA. We show that the homology graph is invariant under homeomorphic abstraction, i.e. under weak morphisms that are homeomorphisms.
\end{abstract}

\maketitle

\section*{Introduction}

Higher dimensional automata constitute a powerful model for concurrent systems  \cite{FajstrupGR, vanGlabbeek, Goubault}. A higher dimensional automaton (HDA) is a precubical set (i.e. a cubical set without degeneracies) with initial and final states and labels in a monoid. The labelled edges of an HDA represent the actions of the system modelled by the HDA. Squares and higher dimensional cubes indicate independence of actions: if two actions $a$ and $b$ are enabled in a state and are independent in the sense that they may be executed in any order or even simultaneously without any observable difference, then this is indicated, as in figure \ref{fig1}, by a square linking the two execution sequences $ab$ and $ba$. Similarly, the independence of $n$ actions is represented by an $n$-cube. 

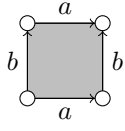
\begin{figure}[t]
\begin{tikzpicture}[initial text={},on grid]

\path[draw, fill=lightgray] (0,0)--(1,0)--(1,1)--(0,1)--cycle;

 \node[state,minimum size=0pt,inner sep =2pt,fill=white] (q_0)   {}; 
    
   \node[state,minimum size=0pt,inner sep =2pt,fill=white] (q_2) [right=of q_0,xshift=0cm] {};
   
   \node[state,minimum size=0pt,inner sep =2pt,fill=white] [above=of q_0, yshift=0cm] (q_3)   {};

   \node[state,minimum size=0pt,inner sep =2pt,fill=white] (q_5) [right=of q_3,xshift=0cm] {}; 
   
    \path[->] 
    (q_0) edge[below] node {$a$} (q_2)
    (q_3) edge[above]  node {$a$} (q_5)
    (q_0) edge[left]  node {$b$} (q_3)
    (q_2) edge[right]  node {$b$} (q_5);

\end{tikzpicture}
\caption{Cubes represent independence of actions} \label{fig1}
\end{figure}

The 1-skeleton of a precubical set is directed (multi)graph, and therefore an HDA can be called a ``directed topological object". Various notions of directed homology have been defined in the literature for HDAs and other directed topological objects \cite{FahrenbergDiH, GaucherHomol, GoubaultJensen, GrandisBook, Husainov}. In this paper, we take a still different approach to directed homology and introduce the homology graph of an HDA. This is a directed graph whose nodes are the homology classes of the HDA. The definition and some basic properties of the homology graph of an HDA are given in section \ref{hograph}.

Our main result on the homology graph is that it is invariant under homeomorphic abstraction. Homeomorphic abstraction is a preorder relation for HDAs that can be seen as a kind of T-homotopy equivalence in the sense of Gaucher and Goubault \cite{GaucherGoubault}. A homeomorphic abstraction of an HDA provides a smaller representation of the modelled system. The definition of homeomorphic abstraction is based on the concept of weak morphism that has been introduced in \cite{weakmor}. Roughly speaking, a weak morphism of precubical sets is a continuous map between the geometric realisations that sends vertices to vertices and subdivided cubes to subdivided cubes. The precise definition is given in section \ref{wm}. An HDA $\A$ is said to be a homeomorphic abstraction of an HDA $\B$ if there exists a weak morphism from $\A$ to $\B$ that preserves labels of paths, that is a bijection on initial and on final states and that is a homeomorphism. In section \ref{homeoabs}, we show that the homology graph is invariant under weak morphisms that are homeomorphisms and thus under homeomorphic abstraction.

\section{Preliminaries on precubical sets and HDAs}

This section, which is taken from \cite{weakmor}, contains some basic and well-known material on precubical sets and higher dimensional automata.

\subsection{Precubical sets}

A \emph{precubical set} is a graded set $P = (P_n)_{n \geq 0}$ with  \emph{boundary operators} $d^k_i: P_n \to P_{n-1}$ $(n>0,\;k= 0,1,\; i = 1, \dots, n)$ satisfying the relations $d^k_i\circ d^l_{j}= d^l_{j-1}\circ d^k_i$ $(k,l = 0,1,\; i<j)$ \cite{FahrenbergThesis, 
FajstrupGR, GaucherGoubault, Goubault}. The least $n\geq 0$ such that $P_i = \emptyset$ for all $i>n$ is called the \emph{dimension} of $P$. If no such $n$ exists, then the dimension of $P$ is $\infty$. If $x\in P_n$, we say that $x$ is of \emph{degree} $n$ and write $\deg(x) = n$. The elements of degree $n$ are called the \emph{$n$-cubes} of $P$. The elements of degree $0$ are also called the \emph{vertices} or the \emph{nodes} of $P$. A morphism of precubical sets is a morphism of graded sets that is compatible with the boundary operators. 

The category of precubical sets can be seen as the presheaf category of functors $\Box^{\mbox{\tiny op}} \to \bf Set$ where $\Box$ is the small subcategory of the category of topological spaces whose objects are the standard $n$-cubes $[0,1]^n$ $(n \geq 0)$ and whose non-identity morphisms are composites of the maps $\delta^k_i\colon [0,1]^n\to [0,1]^{n+1}$ ($k \in \{0,1\}$, $n \geq 0$, $i \in  \{1, \dots, n+1\}$) given by $\delta_i^k(u_1,\dots, u_n)= (u_1,\dots, u_{i-1},k,u_i \dots, u_n)$. Here, we use the convention that given a topological space $X$, $X^0$ denotes the one-point space $\{()\}$.



\subsection{Precubical subsets}

A \emph{precubical subset} of a precubical set $P$ is a graded subset of $P$ that is stable under the boundary  operators. It is clear that a precubical subset is itself a precubical set. Note that unions and intersections of precubical subsets are precubical subsets and that images and preimages of precubical subsets under a morphism  of precubical sets are precubical subsets. 

\subsection{Intervals}

Let $k$ and $l$ be integers such that $k \leq l$. The \emph{precubical interval}  $\llbracket k,l \rrbracket$ is the at most $1$-dimensional precubical set defined by $\llbracket k,l \rrbracket_0 = \{k,\dots , l\}$, $\llbracket k,l \rrbracket_1 = \{[k,k+1], \dots , [l-1,l]\}$, $d_1^0[j-1,j] = j-1$ and $d_1^1[j-1,j] = j$. We shall use the abbreviations $\rrbracket k,l \llbracket = \llbracket k,l\rrbracket \setminus \{k,l\}$, $\llbracket k,l \llbracket = \llbracket k,l\rrbracket \setminus \{l\}$ and $\rrbracket k,l \rrbracket = \llbracket k,l\rrbracket \setminus \{k\}$.

\subsection{Tensor product} 

Given two graded sets $P$ and $Q$, the \emph{tensor product} $P\otimes Q$ is the graded set defined by $(P\otimes Q)_n = \coprod \limits_{p+q = n} P_p\times Q_q$. If $P$ and $Q$ are precubical sets, then $P\otimes Q$ is a precubical set with respect to the boundary operators given by
$$d_i^k(x,y) = \left\{ \begin{array}{ll} (d_i^kx,y), & 1\leq i \leq \deg(x),\\
(x,d_{i-\deg(x)}^ky), & \deg(x)+1 \leq i \leq \deg(x) + \deg(y)
\end{array}\right.$$ 
(cf. \cite{FahrenbergThesis}). 
The tensor product turns the categories of graded and precubical sets 
into monoidal categories.  

The $n$-fold tensor product of a graded or precubical set $P$ is denoted by $P^{\otimes n}$. Here, we use the convention $P^{\otimes 0} = \llbracket 0,0\rrbracket = \{0\}$. The \emph{precubical $n$-cube} is the precubical set $\llbracket 0,1\rrbracket^ {\otimes n}$. The only element of degree $n$ in $\llbracket 0,1\rrbracket^ {\otimes n}$ will be denoted by $\iota_n$. We thus have $\iota_0 = 0$ and $\iota_n = (\underbrace{ [0,1] ,\dots , [ 0,1]}_{n\; { \rm{times}}})$ for $n>0$.

\subsection{The morphism corresponding to an element} 

Let $x$ be an element of degree $n$ of a precubical set $P$. Then there exists a unique morphism of precubical sets $x_{\sharp}\colon \llbracket 0,1\rrbracket ^{\otimes n}\to P$ such that $x_{\sharp}(\iota_n) = x$. Indeed, 
by the Yoneda lemma, there exist unique morphisms of precubical sets $f\colon \Box(-,[0,1]^n) \to P$ and $g\colon \Box(-,[0,1]^n) \to \llbracket 0,1\rrbracket ^{\otimes n}$ such that $f(id_{[0,1]^n}) = x$ and $g(id_{[0,1]^n}) = \iota_n$. The map $g$ is an isomorphism, and $x_{\sharp} = f\circ g^{-1}$. 

\subsection{Paths} 

A \emph{path of length $k$} in a precubical set $P$ is a morphism of precubical sets $\omega \colon \llbracket 0,k \rrbracket \to P$. The set of paths in $P$ is denoted by $P^{\mathbb I}$. If $\omega \in P^{\mathbb I}$ is a path of length $k$, we write $\length(\omega) = k$. The \emph{concatenation} of two paths $\omega \colon \llbracket 0,k \rrbracket \to P$ and $\nu \colon \llbracket 0,l \rrbracket \to P$ with $\omega (k) = \nu (0)$ is the path $\omega \cdot \nu\colon \llbracket 0,{k+l} \rrbracket \to P$ defined by 
$$\omega \cdot \nu (j) = \left\{\begin{array}{ll} \omega (j), & 0\leq j \leq k,\\ \nu(j-k), & k \leq j \leq k+l \end{array}\right.$$ and $$\omega \cdot \nu ([j-1,j]) = \left\{\begin{array}{ll} \omega ([j-1,j]) & 0< j \leq k,\\ \nu([j-k-1,j-k]) & k < j \leq k+l. \end{array}\right. $$ Clearly, concatenation is associative. Note that for any path $\omega \in P^{\mathbb I}$ of length $k \geq 1$ there exists a unique sequence $(x_1, \dots , x_k)$ of elements of $P_1$ such that $d_1^0x_{j+1} = d_1^1x_j$ for all $1\leq j < k$ and $\omega = x_{1\sharp} \cdots x_{k\sharp}$.

\subsection{Geometric realisation}
 \label{geomreal}  

The \emph{geometric realisation} of a precubical set $P$ is the quotient space $|P|=(\coprod _{n \geq 0} P_n \times [0,1]^n)/\sim$ where the sets $P_n$ are given the discrete topology and the equivalence relation is given by
$$(d^k_ix,u) \sim (x,\delta_i^k(u)), \quad  x \in P_{n+1},\; u\in [0,1]^n,\; i \in  \{1, \dots, n+1\},\; k \in \{0,1\}$$ (see \cite{FahrenbergThesis, 
FajstrupGR,  GaucherGoubault, Goubault}). The geometric realisation of a morphism of precubical sets $f\colon P \to Q$ is the continuous map $|f|\colon |P| \to |Q|$ given by $|f|([x,u])= [f(x),u]$. We remark that the geometric realisation is a functor from the category of precubical sets to the category $\bf Top$  of topological spaces. The geometric realisation is left adjoint to the \emph{singular precubical set} functor $S$ defined by $S(X)_n = {\bf Top}([0,1]^n,X)$, $d_i^k\sigma = \sigma\circ \delta_i^ k$ and $S(f)(\sigma) =f\circ \sigma$.

\begin{exs}
(i) The geometric realisation of the precubical $n$-cube can be identified with the standard $n$-cube by means of the homeomorphism $[0,1]^n \to |\llbracket 0,1\rrbracket^ {\otimes n}|$, $u \mapsto [\iota_n,u]$.

(ii) The geometric realisation of the precubical interval $\llbracket k,l \rrbracket$ can be identified with the closed interval $[k,l]$ by means of the homeomorphism $|\llbracket k,l \rrbracket| \to [k,l]$ given by $[j,()] \mapsto j$ and $[[j-1,j],t] \mapsto j-1+t$. Using this correspondence, the geometric realisation of a precubical path $\llbracket 0, k\rrbracket \to P$ can be seen as a path $[0,k] \to |P|$, and under this identification we have that $|\omega \cdot \nu| = |\omega|\cdot |\nu|$. 
\end{exs}

We note that for every element $a \in |P|$ there exist a unique integer $n\geq 0$, a unique element $x \in P_n$ and a unique element $u \in ]0,1[^ n$ such that $a = [x,u]$.

The geometric realisation of a precubical set $P$ is a CW-complex \cite{GaucherGoubault}. The $n$-skeleton of $|P|$ is the geometric realisation of the precubical subset $P_{\leq n}$ of $P$ defined by $(P_{\leq n})_m = P_m$ $(m\leq n)$. The closed $n$-cells of $|P|$ are the spaces $|x_{\sharp}(\llbracket 0,1\rrbracket^{\otimes n})|$ where $x \in P_n$. The characteristic map of the cell $|x_{\sharp}(\llbracket 0,1\rrbracket^{\otimes n})|$ is the map $[0,1]^n \stackrel{\approx}{\to} |\llbracket 0,1\rrbracket ^{\otimes n}| \stackrel{|x_{\sharp}|}{\to} |P|, u \mapsto [x,u]$. The geometric realisation of a precubical subset $Q$ of $P$ is a subcomplex of $|P|$.

The geometric realisation is a comonoidal functor with respect to the natural continuous map $\psi_{P,Q}\colon |P\otimes Q| \to |P| \times |Q|$ given by \begin{eqnarray*}\lefteqn{\psi_{P,Q} ([(x,y),(u_1,\dots ,u_{\deg(x)+\deg(y)})])}\\ &=& ([x,(u_1,\dots , u_{\deg(x)})],[y,(u_{\deg(x)+1}, \dots u_{\deg(x)+\deg(y)}]).\end{eqnarray*} If $P$ and $Q$ are finite, then $\psi_{P,Q}$ is a homeomorphism and permits us to identify $|P\otimes Q|$ with $|P|\times |Q|$. We may thus identify the geometric realisation of a precubical set of the form $\llbracket k_1,l_1\rrbracket \otimes \cdots \otimes \llbracket k_n,l_n\rrbracket$ $(k_i<l_i)$ with the product $[k_1,l_1]\times \cdots \times [k_n,l_n]$ by means of the correspondence $$\left[([i_1,i_1+1], \dots,[i_n,i_n+1]),(u_1,\dots,u_n)\right] \mapsto (i_1+u_1,\dots ,i_n+u_n).$$

\subsection{Higher dimensional automata} \label{HDAdef}

Let $M$ be a monoid. A \emph{higher dimensional automaton over} $M$ (abbreviated $M$-HDA or simply HDA) is a tuple $\A = (P,I,F, \lambda)$ where $P$ is a precubical set, $I \subseteq P_0$ is a set of \emph{initial states}, $F \subseteq P_0$ is a set of \emph{final states}, and $\lambda \colon P_1 \to M$ is a map, called the \emph{labelling function}, such that $\lambda (d_i^0x) = \lambda (d_i^1x)$ for all $x \in P_2$ and $i \in \{1,2\}$. A \emph{morphism} from an $M$-HDA $\A = (P,I, F,\lambda)$ to an $M$-HDA $\B = (P',I', F',\lambda')$ is a morphism of precubical sets  $f\colon P \to P'$  such that $f(I) \subseteq I'$, $f(F) \subseteq F'$ and  $\lambda'(f(x)) = \lambda(x)$ for all $x \in P_1$.

This definition of higher dimensional automata is essentially the same as the one in \cite{vanGlabbeek}. Besides the fact that we consider a monoid and not just a set of labels, the only difference is that in \cite{vanGlabbeek} an HDA is supposed to have exactly one initial state. Note that 1-dimensional $M$-HDAs and morphisms of 1-dimensional $M$-HDAs are the same as automata over $M$ and automata morphisms as defined in \cite{Sakarovitch}.

\section{Weak morphisms} 
\label{wm}

Morphisms of HDAs are often too rigid to be useful for the comparison of HDAs. For instance, the two HDAs $\A$ and $\B$ in figure \ref{fig2} model the same system, but there are no morphisms of HDAs between them. In order to overcome this problem, weak morphisms have been introduced in \cite{weakmor}. In figure \ref{fig2}, there exists a weak morphism from $\A$ to $\B$. This section contains the definition and the basic properties of weak morphisms.  
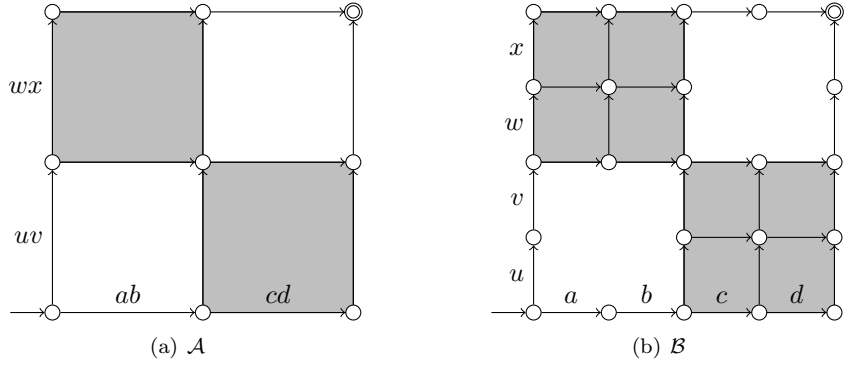
\begin{figure}
\subfigure[$\A$]
{
\begin{tikzpicture}[initial text={},on grid]

\path[draw, fill=lightgray] (0,2)--(2,2)--(2,4)--(0,4)--cycle
(2,0)--(4,0)--(4,2)--(2,2)--cycle;

 \node[state,minimum size=0pt,inner sep =2pt,initial,fill=white] (q_0)   {};

\node[state,minimum size=0pt,inner sep =2pt,fill=white] (q_2) [right=of q_0,xshift=1cm] {};

      \node[state,minimum size=0pt,inner sep =2pt,fill=white] (q_6) [above=of q_0,yshift=1cm] {};

      \node[state,minimum size=0pt,inner sep =2pt,fill=white] (q_8) [above=of q_2,yshift=1cm] {};

      \node[state,minimum size=0pt,inner sep =2pt,fill=white] (q_11) [above=of q_6,yshift=1cm] {};

      \node[state,minimum size=0pt,inner sep =2pt,fill=white] (q_13) [above=of q_8,yshift=1cm] {};

   \node[state,minimum size=0pt,inner sep =2pt,fill=white] (p_1) [right=of q_2,xshift=1cm] {};

      \node[state,minimum size=0pt,inner sep =2pt,fill=white] (p_7) [above=of p_1,yshift=1cm] {};

      \node[state,minimum size=0pt,accepting,inner sep =2pt,fill=white] (p_12) [right=of q_13,xshift=1cm] {};

    \path[->] 
    (q_0) edge[above]  node {$ab$} (q_2)
    (q_2) edge[above]  node {$cd$} (p_1)
    
	(q_0) edge[left]  node {$uv$} (q_6)
    (q_2) edge[right]  node {} (q_8)
    (p_1) edge[left]  node {} (p_7)
    
    (q_6) edge[above]  node {} (q_8)
    (q_8) edge[above]  node {} (p_7)
    
    (q_6) edge[left]  node {$wx$} (q_11)
	(q_8) edge[right]  node {} (q_13)
    (p_7) edge[left]  node {} (p_12)
   
    (q_11) edge[above]  node {} (q_13)
    (q_13) edge[above]  node {} (p_12)
    ;
\end{tikzpicture}
} 
\hspace{1cm}
\subfigure[$\B$] 
{
\begin{tikzpicture}[initial text={},on grid]

\path[draw, fill=lightgray] (0,2)--(2,2)--(2,4)--(0,4)--cycle
(2,0)--(4,0)--(4,2)--(2,2)--cycle;

 \node[state,minimum size=0pt,inner sep =2pt,initial,fill=white] (q_0)   {}; 
    
   \node[state,minimum size=0pt,inner sep =2pt,fill=white] (q_1) [right=of q_0,xshift=0cm] {};

\node[state,minimum size=0pt,inner sep =2pt,fill=white] (q_2) [right=of q_1,xshift=0cm] {};   
   
   \node[state,minimum size=0pt,inner sep =2pt,fill=white] [above=of q_0, yshift=0cm] (q_3)   {};

   \node[state,minimum size=0pt,inner sep =2pt,fill=white] (q_5) [above=of q_2,xshift=0cm] {}; 
   
      \node[state,minimum size=0pt,inner sep =2pt,fill=white] (q_6) [above=of q_3,yshift=0cm] {};
      
      \node[state,minimum size=0pt,inner sep =2pt,fill=white] (q_7) [right=of q_6,yshift=0cm] {};

      \node[state,minimum size=0pt,inner sep =2pt,fill=white] (q_4) [above=of q_7,yshift=0cm] {};

      \node[state,minimum size=0pt,inner sep =2pt,fill=white] (q_8) [above=of q_5,yshift=0cm] {};
      
      \node[state,minimum size=0pt,inner sep =2pt,fill=white] (q_9) [above=of q_6,yshift=0cm] {};
      
      \node[state,minimum size=0pt,inner sep =2pt,fill=white] (q_10) [above=of q_8,yshift=0cm] {};
      
      \node[state,minimum size=0pt,inner sep =2pt,fill=white] (q_11) [above=of q_9,yshift=0cm] {};
      
      \node[state,minimum size=0pt,inner sep =2pt,fill=white] (q_12) [right=of q_11,xshift=0cm] {};
      
      \node[state,minimum size=0pt,inner sep =2pt,fill=white] (q_13) [above=of q_10,yshift=0cm] {};

 \node[state,minimum size=0pt,inner sep =2pt,fill=white] (p_0) [right=of q_2,xshift=0cm]  {}; 
    
   \node[state,minimum size=0pt,inner sep =2pt,fill=white] (p_1) [right=of p_0,xshift=0cm] {};

   \node[state,minimum size=0pt,inner sep =2pt,fill=white] (p_3) [above=of p_0,xshift=0cm] {};

   \node[state,minimum size=0pt,inner sep =2pt,fill=white] (p_4) [above=of p_1,xshift=0cm] {};

      \node[state,minimum size=0pt,inner sep =2pt,fill=white] (p_6) [above=of p_0,yshift=1cm] {};
      
      \node[state,minimum size=0pt,inner sep =2pt,fill=white] (p_7) [above=of p_4,yshift=0cm] {};
      
      \node[state,minimum size=0pt,inner sep =2pt,fill=white] [above=of p_7, yshift=0cm] (p_8)   {};

      \node[state,minimum size=0pt,inner sep =2pt,fill=white] (p_11) [right=of q_13,yshift=0cm] {};
      
      \node[state,minimum size=0pt,accepting,inner sep =2pt,fill=white] (p_12) [right=of p_11,xshift=0cm] {};

    \path[->] 
    (q_0) edge[above]  node {$a$} (q_1)
    (q_1) edge[above]  node {$b$} (q_2)
	(q_2) edge[above]  node {$c$} (p_0)    
    (p_0) edge[above]  node {$d$} (p_1)
    
	(q_0) edge[left]  node {$u$} (q_3)
    (q_2) edge[right]  node {} (q_5)
    (p_0) edge[right]  node {} (p_3)
    (p_1) edge[left]  node {} (p_4)
    
    (q_5) edge[above]  node {} (p_3)
    (p_3) edge[above]  node {} (p_4)
    
    (q_3) edge[left]  node {$v$} (q_6)
    (q_5) edge[left]  node {} (q_8)    
    (p_3) edge[left]  node {} (p_6)	
    (p_4) edge[left]  node {} (p_7)
    
    (q_6) edge[above]  node {} (q_7)
    (q_7) edge[above]  node {} (q_8)
    (q_8) edge[above]  node {} (p_6)
    (p_6) edge[above]  node {} (p_7)
    
    (q_6) edge[left]  node {$w$} (q_9)
	(q_7) edge[left]  node {} (q_4)    
    (q_8) edge[right]  node {} (q_10)
    (p_7) edge[left]  node {} (p_8)

    (q_9) edge[above]  node {} (q_4)
    (q_4) edge[above]  node {} (q_10)
    
    (q_9) edge[left]  node {$x$} (q_11)
	(q_4) edge[left]  node {} (q_12)   
    (q_10) edge[right]  node {} (q_13)
    (p_8) edge[left]  node {} (p_12)
    
    (q_11) edge[above]  node {} (q_12)
    (q_12) edge[above]  node {} (q_13)
    (q_13) edge[above]  node {} (p_11)
    (p_11) edge[above]  node {} (p_12)
    ;
\end{tikzpicture}
}
\caption{Two HDAs modelling the same system. Parallel edges are meant to have the same label.}
\label{fig2}
\end{figure}

\subsection{Weak morphisms of precubical sets} 
A \emph{weak morphism} from a precubical set $P$ to a precubical set $Q$ is a continuous map $f\colon |P| \to |Q|$ such that the following two conditions hold:
\begin{enumerate}
\item for every vertex $v\in P_0$ there exists a (necessarily unique) vertex $f_0(v)\in Q_0$ such that $f([v,()]) = [f_0(v),()]$;
\item for all integers $n, k_1, \dots, k_n\geq 1$ and every morphism of precubical sets $\xi \colon \llbracket 0,{k_1} \rrbracket\otimes \cdots \otimes \llbracket 0,{k_n} \rrbracket \to P$ there exist integers $l_1, \dots, l_n\geq 1$, a morphism of precubical sets $\chi \colon \llbracket 0,{l_1} \rrbracket\otimes \cdots \otimes \llbracket 0,{l_n} \rrbracket \to Q$ and a homeo\-morphism 
\begin{eqnarray*}\lefteqn{\phi\colon  |\llbracket 0,{k_1} \rrbracket\otimes \cdots \otimes \llbracket 0,{k_n} \rrbracket| = [0,k_1] \times \cdots \times [0,k_n]}\\ &\to& |\llbracket 0,{l_1} \rrbracket\otimes \cdots \otimes \llbracket 0,{l_n} \rrbracket|= [0,l_1] \times \cdots \times [0,l_n] \end{eqnarray*} 
such that $f\circ |\xi| = |\chi|\circ \phi$ and $\phi$ is a dihomeomorphism, i.e. $\phi$ and $\phi^{-1}$ preserve the natural partial order of $\R^n$. 
\end{enumerate}

It is clear that the geometric realisation of a morphism of precubical sets is a weak morphism. We remark that weak morphisms are stable under composition. It is also important to note that the integers $l_1, \dots, l_n\geq 1$, the morphism of precubical sets $\chi$ and the dihomeomorphism $\phi$ in condition (2) above are unique and that $\phi$ is itself a weak morphism \cite[2.3.5]{weakmor}. In the case of the morphism of precubical sets $x _{\sharp} \colon \llbracket 0,1\rrbracket ^ {\otimes n}\to P$ corresponding to an element $x \in P_n$ $(n>0)$, we shall use the notation $R_x = \llbracket 0,{l_1} \rrbracket\otimes \cdots \otimes \llbracket 0,{l_n} \rrbracket$, $\phi_x = \phi$ and $x_{\flat} = \chi$. We shall also write $\dot{R}_x = \rrbracket 0,{l_1} \llbracket\otimes \cdots \otimes \rrbracket 0,{l_n} \llbracket$ and $\partial R_x = R_x \setminus \dot{R}_x$.

\subsection{Weak morphisms and paths} 
Let $f \colon |P| \to |Q|$ be a weak morphism of precubical sets and $\omega \colon \llbracket 0,k \rrbracket \to P$ $(k \geq 0)$ be a path. If $k > 0$, we denote by $f^{\mathbb I}(\omega)$ the unique path $\nu \colon \llbracket 0,{l} \rrbracket \to Q$ for which there exists a dihomeomorphism $\phi \colon |\llbracket 0,k \rrbracket| = [0,k] \to |\llbracket 0,{l} \rrbracket| = [0,l]$ such that $f\circ |\omega | = |\nu |\circ \phi$. If $k= 0$, $f^{\mathbb I}(\omega)$ is defined to be the path in $Q$ of length $0$ given by $f^{\mathbb I}(\omega)(0) = f_0(\omega(0))$. Note that if $g\colon P \to Q$ is a morphism of precubical sets such that $f = |g|$, then $f^{\mathbb I}(\omega) = g\circ \omega$. Note also that the path 
$f^{\mathbb I}(\omega )$ leads from $f_0(\omega(0))$ to $f_0(\omega(k))$ and that the map $f^{\mathbb I}\colon P^{\mathbb I} \to Q^{\mathbb I}$ is  compatible with composition and concatenation \cite[2.5]{weakmor}.

\subsection{Weak morphisms of HDAs }
 \label{defHDAmor}
A \emph{weak morphism} from an $M$-HDA  $\A = (P,I, F,\lambda)$ to an $M$-HDA $\B = (Q,J, G,\mu)$  is a weak morphism $f\colon |P| \to |Q|$  such that $f_0(I) \subseteq J$, $f_0(F) \subseteq G$ and  $\overline{\mu}\circ f^{\mathbb I} = \overline{\lambda}$. 

\subsection{Weak morphisms and precubical subsets} 
Weak morphisms are more flexible than morphisms of precubical sets, but they are much more rigid than arbitrary continuous maps. Here, we show that, like morphisms of precubical sets, they send precubical subsets to precubical subsets:

\begin{prop}
Let $f\colon |P| \to |Q|$ be a weak morphism of precubical sets and $X$ be a precubical subset  of $P$. Then there exists a unique precubical subset $A$ of $Q$ such that $f(|X|) = |A|$. It satisfies  $\dim(A) = \dim (X)$. If $X$ is finite, so is $A$.
\end{prop}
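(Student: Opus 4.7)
The plan is to reduce the claim to a statement about individual closed cells of $|X|$ and then assemble the pieces. Recall from the discussion of the CW structure that $|X|$ is the union of the closed cells $|x_\sharp(\llbracket 0,1\rrbracket^{\otimes n})|$ for $x\in X_n$, so
\[
f(|X|)=\bigcup_{x\in X} f\bigl(|x_\sharp(\llbracket 0,1\rrbracket^{\otimes n})|\bigr).
\]
The key observation is that each of these cell images is already the geometric realisation of a precubical subset of $Q$, thanks to condition (2) of the weak morphism definition.

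For $x\in X_n$ with $n>0$, applying condition (2) to $\xi=x_\sharp$ produces the precubical set $R_x$, the morphism $x_\flat\colon R_x\to Q$ and the dihomeomorphism $\phi_x$ satisfying $f\circ|x_\sharp|=|x_\flat|\circ\phi_x$; since $\phi_x$ is surjective this gives $f(|x_\sharp(\llbracket 0,1\rrbracket^{\otimes n})|)=|x_\flat(R_x)|$. For $x\in X_0$, condition (1) gives $f(|\{x\}|)=|\{f_0(x)\}|$. I therefore define
\[
A \;=\; \bigcup_{n>0}\bigcup_{x\in X_n} x_\flat(R_x) \;\cup\; \{f_0(v)\mid v\in X_0\}.
\]
Images of precubical subsets under precubical morphisms are precubical subsets, and unions of precubical subsets are precubical subsets, so $A$ is a precubical subset of $Q$, and by construction $|A|=f(|X|)$.

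Uniqueness follows from the remark after the definition of geometric realisation that every point of $|Q|$ has a unique presentation $[y,u]$ with $u\in\,]0,1[^{\deg(y)}$: this forces a precubical subset to be determined by its realisation. For the dimension equality, each factor $\llbracket 0,l_i\rrbracket$ of $R_x$ has dimension $1$ (since $l_i\geq 1$), so $R_x$ has dimension exactly $n=\deg(x)$, and the precubical morphism $x_\flat$ maps the $n$-cubes of $R_x$ to $n$-cubes of $Q$. Hence $\dim(x_\flat(R_x))=n$, and taking the supremum over $x\in X$ yields $\dim(A)=\dim(X)$. Finally, if $X$ is finite then each $R_x$ is a finite tensor product of finite intervals and hence finite, so each $x_\flat(R_x)$ is finite, and $A$ is a finite union of finite sets.

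The step that I expect to require the most care is establishing $|A|=f(|X|)$ as a genuine equality rather than just $|A|\supseteq f(|X|)$: one must check that the cell-local factorisations furnished by condition (2) for different cubes $x\in X$ cover the image without contributing any extra realisation, which reduces to the identity $|x_\flat(R_x)|=f(|x_\sharp(\llbracket 0,1\rrbracket^{\otimes n})|)$ on each closed cell together with the surjectivity of $\phi_x$. The definition of $A$ is independent of any hidden choice because, by the uniqueness statement noted after the weak morphism definition, the triple $(R_x,x_\flat,\phi_x)$ is determined by $x_\sharp$.
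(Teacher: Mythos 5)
Your proof is correct and follows essentially the same route as the paper's: you define $A = f_0(X_0) \cup \bigcup_{x\in X_{>0}} x_{\flat}(R_x)$, verify $|A| = f(|X|)$ cell by cell via the factorisation $f\circ|x_{\sharp}| = |x_{\flat}|\circ\phi_x$, and deduce uniqueness from the unique presentation of points of $|Q|$ as $[a,u]$ with $u\in\,]0,1[^{\deg(a)}$, exactly as the paper does (the paper makes your last step explicit by testing membership with the point $[a,(\tfrac{1}{2},\dots,\tfrac{1}{2})]$). Your added remarks on the dimension count and the independence of the construction from choices are fine and consistent with the paper's assertions.
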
 

\begin{proof}
Set $A = f_0(X_0) \cup \bigcup \limits _{x\in X_{>0}} x_{\flat}(R_x)$. Then $A$ is a precubical subset of $Q$ such $\dim(A) = \dim (X)$. If $X$ is finite, so is $A$. We have $X = X_0 \cup \bigcup \limits _{x\in X_{>0}} x_{\sharp}(\llbracket 0,1\rrbracket^{\otimes \deg(x)})$ and $$|X| = |X_0| \cup \bigcup \limits _{x\in X_{>0}} |x_{\sharp}(\llbracket 0,1\rrbracket^{\otimes \deg(x)})|.$$ Hence $f(|X|) = f(|X_0|) \cup \bigcup \limits _{x\in X_{>0}} f(|x_{\sharp}(\llbracket 0,1\rrbracket^{\otimes \deg(x)})|) = |f_0(X_0)| \cup \bigcup \limits _{x\in X_{>0}} |x_{\flat}(R_x)| = |A|$.

Suppose that $A'$ is another precubical subset of $Q$ such that $f(|X|) = |A'|$. Then $|A'| = |A|$. Consider an element $a \in A$. Then $[a,(\frac{1}{2}, \dots , \frac{1}{2})] \in |A'|$. It follows that there exist elements $a' \in A'$ and $u \in ]0,1[^{\deg(a')}$ such that $[a,(\frac{1}{2}, \dots , \frac{1}{2})] = [a',u]$. This implies that $a = a'$. Thus, $A \subseteq A'$. Similarly, $A' \subseteq A$.
\end{proof}

\begin{defin} Let $f\colon |P| \to |Q|$ be a weak morphism of precubical sets and $X$ be a precubical subset  of $P$. The unique precubical subset $A$ of $Q$ such that $f(|X|) = |A|$ is called the \emph{image of $X$ under $f$} and will be denoted by $f(X)$.
\end{defin}

\begin{rems}
(i) Let $g\colon P \to Q$ be a morphism of precubical sets and $X$ be a precubical subset of $P$. Then $|g|(X) = g(X)$.

(ii) Let $f\colon |P| \to |Q|$ be a weak morphism of precubical sets and $\{X_i\}_{i\in I}$ be a family of precubical subsets of $P$. Then $f(\bigcup_{i \in I}X_i) = \bigcup _{i\in I}f(X_i)$ and $f(\bigcap_{i \in I}X_i) \subseteq \bigcap _{i\in I}f(X_i)$. Given two precubical subsets $X, Y \subseteq P$ such that $X \subseteq Y$, one has $f(X) \subseteq f(Y)$.

(iii) Consider a weak morphism of precubical sets $f\colon |P| \to |Q|$ and an element $x \in P_n$. If $n >0$, then $f(x_{\sharp}(\llbracket 0,1\rrbracket ^{\otimes n})) = x_{\flat}(R_x)$. Indeed, $f(|x_{\sharp}(\llbracket 0,1\rrbracket ^{\otimes n})|) = |x_{\flat}(R_x)|$. If $n=0$, then $f(x_{\sharp}(\llbracket 0,1\rrbracket ^{\otimes n})) = \{f_0(x)\}$. Indeed, $f(|\{x\}|) = |\{f_0(x)\}|$.
\end{rems}

\section{The homology graph of an HDA} \label{hograph}

In this section, we define the homology graph of an HDA and establish its basic properties. We consider singular homology with coefficients in an arbitrary commutative unital ring $R$.

\subsection{The homology graph}
Let $P$ be a precubical set. We say that a homology class $\alpha \in H(|P|)$ \emph{points} to a homology class (of a possibly different degree) $\beta \in H(|P|)$  and write $\alpha \nearrow \beta$ if there exist precubical subsets $X, Y \subseteq P$ such that  $\alpha \in \im H(|X| \hookrightarrow |P|)$, $\beta \in \im H(|Y| \hookrightarrow |P|)$ and for all $x \in X_0$ and $y \in Y_0$ there exists a path in $P$ from $x$ to $y$. The \emph{homology graph} of $P$ is the directed graph whose vertices are the homology classes of $|P|$ and whose edges are given by the relation $\nearrow$. The \emph{homology graph} of an $M$-HDA $\A = (P,I, F,\lambda)$ is defined to be the homology graph of $P$.

\begin{exs}
The directed circle is the precubical set with exactly one vertex and exactly one edge. Every homology class of the directed circle points to every homology class.

The precubical set with two vertices $v$ and $w$ and two edges from $v$ to $w$ has the same homology as the directed circle but not the same homology graph. No non-trivial $1$-dimensional class points to a non-trivial $1$-dimensional class.

Another such example is depicted in figure \ref{fig3}. The precubical sets $P$ and $Q$ have the same homology but distinct homology graphs. In the case of $P$, the homology class representing the lower hole points to the homology class representing the upper hole. In the homology graph of $Q$, there are no edges between non-trivial homology classes of degree $1$.
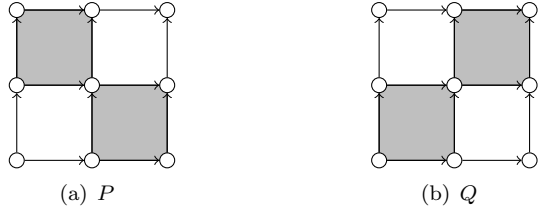
\begin{figure}
\subfigure[$P$]{
\begin{tikzpicture}[initial text={},on grid]

\path[draw, fill=lightgray] (0,1)--(1,1)--(1,2)--(0,2)--cycle
(1,0)--(2,0)--(2,1)--(1,1)--cycle;

 \node[state,minimum size=0pt,inner sep =2pt,fill=white] (q_0)   {};

\node[state,minimum size=0pt,inner sep =2pt,fill=white] (q_2) [right=of q_0] {};

      \node[state,minimum size=0pt,inner sep =2pt,fill=white] (q_6) [above=of q_0] {};

      \node[state,minimum size=0pt,inner sep =2pt,fill=white] (q_8) [above=of q_2] {};

      \node[state,minimum size=0pt,inner sep =2pt,fill=white] (q_11) [above=of q_6] {};

      \node[state,minimum size=0pt,inner sep =2pt,fill=white] (q_13) [above=of q_8] {};

   \node[state,minimum size=0pt,inner sep =2pt,fill=white] (p_1) [right=of q_2] {};

      \node[state,minimum size=0pt,inner sep =2pt,fill=white] (p_7) [above=of p_1] {};

      \node[state,minimum size=0pt,inner sep =2pt,fill=white] (p_12) [right=of q_13] {};

    \path[->] 
    (q_0) edge[above]  node {} (q_2)
    (q_2) edge[above]  node {} (p_1)
    
	(q_0) edge[left]  node {} (q_6)
    (q_2) edge[right]  node {} (q_8)
    (p_1) edge[left]  node {} (p_7)
    
    (q_6) edge[above]  node {} (q_8)
    (q_8) edge[above]  node {} (p_7)
    
    (q_6) edge[left]  node {} (q_11)
	(q_8) edge[right]  node {} (q_13)
    (p_7) edge[left]  node {} (p_12)
   
    (q_11) edge[above]  node {} (q_13)
    (q_13) edge[above]  node {} (p_12)
    ;
\end{tikzpicture}
}
\hspace{2cm}
\subfigure[$Q$]{
\begin{tikzpicture}[initial text={},on grid]

\path[draw, fill=lightgray] (1,1)--(2,1)--(2,2)--(1,2)--cycle
(0,0)--(1,0)--(1,1)--(0,1)--cycle;

 \node[state,minimum size=0pt,inner sep =2pt,fill=white] (q_0)   {};

\node[state,minimum size=0pt,inner sep =2pt,fill=white] (q_2) [right=of q_0] {};

      \node[state,minimum size=0pt,inner sep =2pt,fill=white] (q_6) [above=of q_0] {};

      \node[state,minimum size=0pt,inner sep =2pt,fill=white] (q_8) [above=of q_2] {};

      \node[state,minimum size=0pt,inner sep =2pt,fill=white] (q_11) [above=of q_6] {};

      \node[state,minimum size=0pt,inner sep =2pt,fill=white] (q_13) [above=of q_8] {};

   \node[state,minimum size=0pt,inner sep =2pt,fill=white] (p_1) [right=of q_2] {};

      \node[state,minimum size=0pt,inner sep =2pt,fill=white] (p_7) [above=of p_1] {};

      \node[state,minimum size=0pt,inner sep =2pt,fill=white] (p_12) [right=of q_13] {};

    \path[->] 
    (q_0) edge[above]  node {} (q_2)
    (q_2) edge[above]  node {} (p_1)
    
	(q_0) edge[left]  node {} (q_6)
    (q_2) edge[right]  node {} (q_8)
    (p_1) edge[left]  node {} (p_7)
    
    (q_6) edge[above]  node {} (q_8)
    (q_8) edge[above]  node {} (p_7)
    
    (q_6) edge[left]  node {} (q_11)
	(q_8) edge[right]  node {} (q_13)
    (p_7) edge[left]  node {} (p_12)
   
    (q_11) edge[above]  node {} (q_13)
    (q_13) edge[above]  node {} (p_12)
    ;
\end{tikzpicture}
}
\caption{Precubical sets with the same homology but different homology graphs}
\label{fig3}
\end{figure}

\end{exs}

\subsection{Some basic properties} 

Let $P$ be a precubical set.

\begin{prop}
Let $\alpha, \beta \in H(|P|)$ be homology classes such that $\alpha \nearrow \beta$. Then for all $r,s \in R$, $r\alpha \nearrow s \beta$.
\end{prop}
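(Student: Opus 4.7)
The plan is to show that the same pair of precubical subsets $X, Y \subseteq P$ that witness $\alpha \nearrow \beta$ also witnesses $r\alpha \nearrow s\beta$; the statement then reduces to the fact that induced maps on singular homology are $R$-linear, so that images are $R$-submodules.

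More precisely, by hypothesis there exist precubical subsets $X, Y \subseteq P$ with $\alpha \in \im H(|X| \hookrightarrow |P|)$, $\beta \in \im H(|Y| \hookrightarrow |P|)$, and such that every vertex of $X$ is connected by a path in $P$ to every vertex of $Y$. I would pick classes $\widetilde{\alpha} \in H(|X|)$ and $\widetilde{\beta} \in H(|Y|)$ mapping to $\alpha$ and $\beta$ respectively. Since singular homology takes values in $R$-modules and the maps induced by the inclusions $|X| \hookrightarrow |P|$ and $|Y| \hookrightarrow |P|$ are $R$-module homomorphisms, the elements $r\widetilde{\alpha}$ and $s\widetilde{\beta}$ map to $r\alpha$ and $s\beta$ under these inclusions. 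Hence $r\alpha \in \im H(|X| \hookrightarrow |P|)$ and $s\beta \in \im H(|Y| \hookrightarrow |P|)$.

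The path condition in the definition of $\nearrow$ depends only on the chosen precubical subsets $X$ and $Y$, not on the homology classes themselves, so it is automatically inherited. Taking $X$ and $Y$ as the witnesses for $r\alpha \nearrow s\beta$ therefore completes the argument. There is no real obstacle here: the only thing being used beyond the definition is $R$-linearity of $H(-)$, which is built into singular homology with coefficients in $R$. I expect the proof to fit in two or three lines.
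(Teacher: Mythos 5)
Your proof is correct and matches the paper's intent exactly: the paper dismisses this proposition with ``This is obvious,'' and the implicit argument is precisely yours, namely that the same witnesses $X,Y$ work because the images of $H(|X|\hookrightarrow|P|)$ and $H(|Y|\hookrightarrow|P|)$ are $R$-submodules while the path condition depends only on $X$ and $Y$. Nothing further is needed.
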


\begin{proof}
This is obvious.
\end{proof}

\begin{prop} \label{point0}
Consider a homology class $\alpha \in H(|P|)$ and the class $0$ of an arbitrary degree. Then  $0 \nearrow \alpha$ and $\alpha \nearrow 0$.
\end{prop}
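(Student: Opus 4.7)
The plan is to exploit the empty precubical subset, which makes the universal path condition in the definition of $\nearrow$ vacuously true. Note that $\emptyset$ is trivially a precubical subset of $P$ (the empty graded set is stable under all boundary operators), its geometric realisation is empty, and hence its singular homology is the zero group in every degree.

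To establish $0 \nearrow \alpha$, I would take $X = \emptyset$ and $Y = P$. Then $0 \in \im H(|X| \hookrightarrow |P|)$ because the only class in $H(|\emptyset|)$ is $0$, and $\alpha \in \im H(|Y| \hookrightarrow |P|) = H(|P|)$ trivially via the identity inclusion. The remaining requirement, namely that for all $x \in X_0$ and $y \in Y_0$ there exists a path in $P$ from $x$ to $y$, is satisfied vacuously since $X_0 = \emptyset$. To establish $\alpha \nearrow 0$, the roles are swapped: take $X = P$ and $Y = \emptyset$, and the same argument works with the vacuous quantifier now over $Y_0 = \emptyset$.

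I expect no real obstacle here: the whole point is that the relation $\nearrow$ only asks for some pair of precubical subsets witnessing the homology classes, and the empty precubical subset always serves as a trivial witness for the class $0$ while simultaneously killing the path condition. The only conceptual check is that $\emptyset$ is admissible as a precubical subset, which is immediate from the definition. Consequently, the proof can be written in just a few lines mirroring the structure above, and it parallels in spirit the proof of the preceding proposition, which the authors also simply marked as obvious.
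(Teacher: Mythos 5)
Your proof is correct and is exactly the paper's argument: the paper's one-line proof also rests on the observation that $0 \in \im H(|\emptyset| \hookrightarrow |P|)$, with the path condition then holding vacuously because $\emptyset_0 = \emptyset$. Nothing to add.
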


\begin{proof}
This follows from the fact that $0 \in \im H(|\emptyset| \hookrightarrow |P|)$.
\end{proof}

As a consequence we obtain that the pointing relation is in general very far from being a partial order:

\begin{cor}
The relation $\nearrow$ is 
\begin{itemize}
\item[(i)] anti-symmetric if and only if $P = \emptyset$;
\item[(ii)] transitive if and only if $\alpha \nearrow \beta$ for all homology classes $\alpha, \beta \in H(|P|)$. 
\end{itemize} 
\end{cor}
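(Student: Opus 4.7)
My plan is to derive both equivalences directly from Proposition \ref{point0}, which says every class points to and is pointed to by the zero class of every degree. Both implications in (i) and (ii) are then essentially one-line arguments, so the main (minor) issue is making sure the ``if $P = \emptyset$'' direction of (i) and the ``if everything points to everything'' direction of (ii) are handled correctly.

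For (i), I would first note the trivial direction: if $P = \emptyset$, then $|P| = \emptyset$, so $H_n(|P|) = 0$ for every $n$, and hence $H(|P|)$ consists of zero classes only. Anti-symmetry is then vacuous: the only pair to check is $(0,0)$, and $0 = 0$. For the converse, suppose $P \neq \emptyset$. Pick any vertex $v \in P_0$; then the homology class $\alpha = [v] \in H_0(|P|)$ is nonzero (under the standing assumption that $R$ is a nonzero ring, $H_0(|P|)$ is a free $R$-module on the path components of $|P|$). Proposition \ref{point0} yields $\alpha \nearrow 0$ and $0 \nearrow \alpha$ even though $\alpha \neq 0$, which contradicts anti-symmetry. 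Hence $P = \emptyset$.

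For (ii), sufficiency is immediate: if $\alpha \nearrow \beta$ for every pair of classes, then in particular $\nearrow$ is transitive. For the converse, assume $\nearrow$ is transitive and take arbitrary $\alpha, \beta \in H(|P|)$. Applying Proposition \ref{point0} twice, once with $\alpha$ and once with $\beta$, gives $\alpha \nearrow 0$ (with the zero of the appropriate degree) and $0 \nearrow \beta$. Transitivity then produces $\alpha \nearrow \beta$, as required.

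The only substantive step is the observation that $H_0(|P|) \neq 0$ whenever $|P| \neq \emptyset$, which is standard; the rest is bookkeeping around Proposition \ref{point0}. There is no real obstacle to the argument.
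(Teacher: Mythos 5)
Your argument is correct and is exactly the intended one: the paper states this corollary as an immediate consequence of Proposition \ref{point0} (no separate proof is given), and your derivation of both directions from $\alpha \nearrow 0$ and $0 \nearrow \alpha$, together with the observation that $H_0(|P|) \neq 0$ whenever $P \neq \emptyset$, is precisely that argument.
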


\begin{exs}
(i) An example of a homology class that is not related to any non-trivial class is given in figure \ref{fig4}.

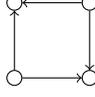
\begin{figure}
\begin{tikzpicture}[initial text={},on grid]

 \node[state,minimum size=0pt,inner sep =2pt,fill=white] (q_0)   {}; 
    
   \node[state,minimum size=0pt,inner sep =2pt,fill=white] (q_2) [right=of q_0,xshift=0cm] {};
   
   \node[state,minimum size=0pt,inner sep =2pt,fill=white] [above=of q_0, yshift=0cm] (q_3)   {};

   \node[state,minimum size=0pt,inner sep =2pt,fill=white] (q_5) [right=of q_3,xshift=0cm] {}; 
   
    \path[->] 
    (q_0) edge[below] node {} (q_2)
    (q_5) edge[above]  node {} (q_3)
    (q_0) edge[left]  node {} (q_3)
    (q_5) edge[right]  node {} (q_2);

\end{tikzpicture}
\caption{The homology class representing the hole is not incident with any edge from or to a non-trivial class} \label{fig4}
\end{figure}

(ii) In a precubical set with exactly one vertex, every homology class points to every homology class.
\end{exs}

\subsection{Compatibility with weak morphisms} Consider a weak morphism of precubical sets $f\colon |P| \to |Q|$.

\begin{theor} \label{morpoint}
Let $\alpha, \beta \in H(|P|)$ be homology classes such that $\alpha \nearrow \beta$. Then $f_*(\alpha) \nearrow f_*(\beta)$.
\end{theor}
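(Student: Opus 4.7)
The natural witnesses for $f_*(\alpha) \nearrow f_*(\beta)$ are the precubical subsets $f(X)$ and $f(Y)$ of $Q$, where $X, Y \subseteq P$ are the subsets witnessing $\alpha \nearrow \beta$. The plan has two parts: verifying the image condition for $f_*(\alpha)$ and $f_*(\beta)$, which is nearly automatic, and verifying the path condition between the vertices of $f(X)$ and $f(Y)$, which is the real work.

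For the image condition, since $f(|X|) = |f(X)|$, the weak morphism $f$ restricts to a continuous map $|X| \to |f(X)|$ that fits into a commutative square with the inclusions $|X| \hookrightarrow |P|$ and $|f(X)| \hookrightarrow |Q|$. Applying $H$ and pulling back a preimage of $\alpha$ in $H(|X|)$ shows $f_*(\alpha) \in \im H(|f(X)| \hookrightarrow |Q|)$. The same argument applied to $Y$ and $\beta$ handles the other class.

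For the path condition, I will show that for all $x' \in f(X)_0$ and $y' \in f(Y)_0$ there is a path in $Q$ from $x'$ to $y'$. From the proof of the preceding proposition, $f(X)_0 = f_0(X_0) \cup \bigcup_{x \in X_{>0}} x_\flat((R_x)_0)$, so $x'$ is either of the form $f_0(v)$ for some $v \in X_0$, or of the form $x_\flat(w)$ for some $x \in X_n$ with $n > 0$ and some vertex $w$ of $R_x = \llbracket 0, l_1 \rrbracket \otimes \cdots \otimes \llbracket 0, l_n \rrbracket$. In the first case set $\tilde x = v$; in the second, take a directed edge path in $R_x$ from $w$ to the maximum vertex $(l_1, \dots, l_n)$ (which exists because $R_x$ is a product of intervals) and push it through the precubical morphism $x_\flat$ to obtain a path in $Q$ from $x'$ to $x_\flat(l_1, \dots, l_n)$. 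Since $\phi_x$ is a dihomeomorphism of $[0,1]^n$ onto $[0,l_1] \times \cdots \times [0,l_n]$, it preserves the top element, so $\phi_x(1, \dots, 1) = (l_1, \dots, l_n)$; combining this with $f \circ |x_\sharp| = |x_\flat| \circ \phi_x$ and the vertex condition in the definition of a weak morphism yields $x_\flat(l_1, \dots, l_n) = f_0(x_\sharp(1, \dots, 1))$. The vertex $\tilde x := x_\sharp(1, \dots, 1)$ lies in $X_0$ because $X$ is closed under boundary operators and contains $x$. Hence in either case there is a (possibly empty) path in $Q$ from $x'$ to $f_0(\tilde x)$ for some $\tilde x \in X_0$.

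Symmetrically, by walking from the minimum vertex of $R_y$, one obtains $\tilde y \in Y_0$ together with a path in $Q$ from $f_0(\tilde y)$ to $y'$. The hypothesis $\alpha \nearrow \beta$ supplies a path $\omega$ in $P$ from $\tilde x$ to $\tilde y$, and $f^{\mathbb I}(\omega)$ is then a path in $Q$ from $f_0(\tilde x)$ to $f_0(\tilde y)$. Concatenating the three pieces gives the required path from $x'$ to $y'$. The main obstacle is precisely this path condition: one must handle vertices of $f(X)$ and $f(Y)$ that are not in the image of $f_0$, and the key trick is to navigate inside the subdivided cubes $R_x$, $R_y$ by directed edge paths and to exploit that a dihomeomorphism preserves the poset-theoretic maximum and minimum, so that the corners of the subdivided cubes always map to vertices coming from $f_0$.
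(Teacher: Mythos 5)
Your proposal is correct and follows essentially the same route as the paper: witnesses $f(X)$, $f(Y)$, then for each vertex of $f(X)_0$ a directed path inside the subdivided cube $R_x$ up to the top corner $x_\flat(l_1,\dots,l_n)=f_0(x_\sharp(1,\dots,1))$, symmetrically down from the bottom corner of $R_y$, and the middle piece $f^{\mathbb I}(\omega)$ for a path $\omega$ from $x_\sharp(1,\dots,1)$ to $y_\sharp(0,\dots,0)$ in $P$. The only cosmetic difference is that the paper locates the vertex $a\in f(X)_0$ via the representation $[a,()]=f([x,u])$ with $u$ interior rather than via the explicit description of $f(X)_0$, but the resulting construction is identical.
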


\dem
Let $X, Y \subseteq P$ be precubical subsets such that $\alpha \in \im H(|X| \hookrightarrow |P|)$, $\beta \in \im H(|Y| \hookrightarrow |P|)$ and for all vertices $x \in X_0$ and $y \in Y_0$ there exists a path from $x$ to $y$. Then $f_*(\alpha) \in \im H(|f(X)| \hookrightarrow |Q|)$ and $f_*(\beta) \in \im H(|f(Y)| \hookrightarrow |Q|)$. 

Consider vertices $a \in f(X)_0$ and $b \in f(Y)_0$. We have to show that there exists a path in $Q$ from $a$ to $b$. Since $|f(X)|= f(|X|)$, there exist an integer $n$ and elements $x \in X_n$ and $u \in ]0,1[^n$ such that $[a,()] = f([x,u])$. We choose a path $\sigma$ in $Q$ beginning in $a$ as follows: If $n = 0$, then $\sigma$ is the constant path from $a$ to $a$. Suppose $n >0$. Let $R_x = \llbracket 0,k_1\rrbracket \otimes \cdots \otimes \llbracket 0,k_n\rrbracket$. Consider elements $\tilde a \in R_x$ and $w \in ]0,1[^ {\deg(\tilde a)}$ such that $\phi_x (u) = [\tilde a,w]$. Then $[a,()] = f\circ |x_{\sharp}|(u) = |x_{\flat}| \circ \phi_x (u) = [x_{\flat}(\tilde a),w]$. It follows that $\deg(\tilde a) = 0$ and $a = x_{\flat}(\tilde a)$. Let $\rho$ be a path in $R_x$ from $\tilde a$ to $(k_1, \dots, k_n)$ and set $\sigma = x_{\flat}\circ \rho$. Then $\sigma $ is a path in $Q$ from $a$ to $x_{\flat}(k_1, \dots, k_n)$. 

Since $|f(Y)| = f(|Y|)$, there exist an integer $m$ and elements $y \in Y_m$ and $u' \in ]0,1[^m$ such that $[b,()] = f([y,u'])$. In a similar fashion as above, we choose a path $\tau$ in $Q$ ending in $b$: If $m = 0$, then $\tau$ is the constant path from $b$ to $b$. Suppose $m >0$. Then there exists a vertex  $\tilde b \in R_y$ such that $y_{\flat}(\tilde b) = b$. Let $\theta$ be a path in $R_y$ from $(0, \dots, 0)$ to $\tilde b$ and set $\tau = y_{\flat} \circ \theta$. Then $\tau $ is a path in $Q$ from $y_{\flat}(0, \dots, 0)$ to $b$.

Consider the vertices $v \in X$ and $w\in Y$ defined by
$$v = \left\{\begin{array}{ll} x, & n = 0,\\x_{\sharp}(1,\dots,1), & n >0 \end{array}\right. \quad \mbox{and}\quad w = \left\{\begin{array}{ll} y, & m=0,\\ y_{\sharp}(0,\dots,0), & m>0.\end{array}\right.$$
Let $\omega $ be a path from $v$ to $w$. If $n=0$, then $f^{\mathbb I}(\omega)(0) = f_0(\omega (0)) = f_0(v) = f_0(x) = a$. If $n>0$, then $f^{\mathbb I}(\omega)(0) = f_0(v) = f_0(x_{\sharp}(1,\dots,1)) = x_{\flat}(k_1,\dots, k_n)$. The last equality holds because \begin{eqnarray*}[f_0(x_{\sharp}(1,\dots,1)),()] &=& f([x_{\sharp}(1,\dots,1),()])\\ &=& f\circ |x_{\sharp}|(1,\dots, 1)\\ &=& |x_{\flat}|\circ \phi_x (1,\dots, 1)\\ &=&  |x_{\flat}| (k_1,\dots, k_n)\\&=& [x_{\flat}(k_1,\dots, k_n),()].\end{eqnarray*} 
A similar argument shows that the end point of $f^{\mathbb I}(\omega)$ is $b$ if $m=0$ and $y_{\flat} (0,\dots ,0)$ else. It follows that $\sigma \cdot f^{\mathbb I}(\omega) \cdot \tau$ is a path in $Q$ from $a$ to $b$.
\findem

\section{Homeomorphic abstraction} \label{homeoabs}

We say that an $M$-HDA $\A = (P,I,F,\lambda)$ is a \emph{homeomorphic abstraction} of an $M\mbox{-}$HDA $\B = (Q,J,G,\mu)$, or that $\B$ is a \emph{homeomorphic refinement} of $\A$, if there exists a weak morphism $f$ from $\A$ to $\B$ that is a homeomorphism and satisfies $f_0(I) = J$ and $f_0(F) = G$.  For instance, in figure \ref{fig2}, $\A$ is a homeomorphic abstraction of $\B$. The purpose of this section is to show that the homology graph is invariant under homeomorphic abstraction. 

Throughout this section, $f\colon |P| \to |Q|$ is a weak morphism of precubical sets that is a homeomorphism.

\subsection{Carriers} 

The \emph{carrier} of an element $a \in Q$ with respect to $f$ is the unique element $c_f(a)\in P$ for which there exists an element $u \in ]0,1[^{\deg(c_f(a))}$ such that $f([c_f(a),u]) = [a,(\tfrac{1}{2},\dots,\tfrac{1}{2})]$. The \emph{carrier} of a precubical subset $A \subseteq Q$ with respect to $f$ is the precubical subset $c_f(A)$ of $P$ defined by $$c_f(A) = \bigcup \limits_{a\in A} c_f(a)_{\sharp}(\llbracket 0,1\rrbracket^{\otimes \deg(c_f(a))}).$$ We shall normally suppress the subscript $f$ and simply write $c(a)$ and $c(A)$ to denote carriers of elements and precubical subsets.

\begin{rems}
(i) If $f = |g|$ for a morphism of precubical sets $g\colon P \to Q$, then $g$ is an isomorphism and $g^{-1}$ is given by $g^{-1}(a) = c(a)$.

(ii) Let $\{A_i\}_{i\in I}$ be a family of precubical subsets of $Q$. Then $c(\bigcup_{i \in I}A_i) = \bigcup _{i\in I}c(A_i)$ and $c(\bigcap_{i \in I}A_i) \subseteq \bigcap _{i\in I}c(A_i)$. Given two precubical subsets $A, B \subseteq Q$ such that $A \subseteq B$, one has $c(A) \subseteq c(B)$.

(iii) For any element $a \in Q$,  $\deg(c(a)) \geq \deg(a)$. If $\deg(c(a)) = 0$, then $[a,(\frac{1}{2}, \dots, \frac{1}{2})] = f([c(a),()]) = [f_0(c(a)),()]$ and hence $\deg(a) = 0$. Suppose that $\deg(c(a))= n >0$. Let $u \in ]0,1[^{n}$ be the unique element such that $|c(a)_{\flat}|(\phi_{c(a)}(u)) = f([c(a),u]) = [a,(\tfrac{1}{2},\dots,\tfrac{1}{2})]$. Consider elements $z \in R_{c(a)}$ and $v \in ]0,1[^{\deg(z)}$ such that $\phi_{c(a)}(u) = [z,v]$. Then $[{c(a)}_{\flat}(z),v] = [a,(\frac{1}{2},\dots, \frac{1}{2})]$. Hence $a = {c(a)}_{\flat}(z)$ and therefore $\deg(a) \leq n$.
\end{rems}

The easy proof of the following proposition can be found in \cite{weakmor}:

\begin{prop} \label{cnat}
Consider a second  weak morphism of precubical sets  $g\colon |P'| \to |Q'|$ that is a homeomorphism and two morphisms of precubical sets $\xi \colon P' \to P$ and $\chi \colon Q'\to Q$ such that  $f\circ |\xi| = |\chi| \circ g$. Then for all $a \in Q'$, $c_f(\chi(a)) = \xi (c_g(a))$.
\end{prop}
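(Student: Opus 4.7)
The plan is to exploit the fact that carriers are characterized uniquely by an existence property: $c_f(\chi(a))$ is the \emph{only} element $y\in P$ for which some $u$ in the open cube $]0,1[^{\deg(y)}$ satisfies $f([y,u]) = [\chi(a),(\tfrac12,\dots,\tfrac12)]$. So to prove $\xi(c_g(a)) = c_f(\chi(a))$, it suffices to exhibit such a $u$ for the candidate $y = \xi(c_g(a))$.

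First I would set $x := c_g(a)$ and, by definition of $c_g$, pick $u \in \,]0,1[^{\deg(x)}$ with $g([x,u]) = [a,(\tfrac12,\dots,\tfrac12)]$. Then I would observe that since morphisms of precubical sets preserve degree, $\deg(\xi(x)) = \deg(x)$, so the same tuple $u$ lies in $]0,1[^{\deg(\xi(x))}$; this is the candidate interior point.

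Next I would compute, using the naturality formula $|\xi|([z,v]) = [\xi(z),v]$ valid for any morphism of precubical sets, together with the hypothesis $f\circ |\xi| = |\chi|\circ g$:
\begin{eqnarray*}
f([\xi(x),u]) &=& f\circ |\xi|([x,u]) \\
&=& |\chi|\circ g([x,u]) \\
&=& |\chi|([a,(\tfrac12,\dots,\tfrac12)]) \\
&=& [\chi(a),(\tfrac12,\dots,\tfrac12)].
\end{eqnarray*}
This is exactly the characterising equation of $c_f(\chi(a))$, so the uniqueness clause in the definition of carrier gives $\xi(c_g(a)) = \xi(x) = c_f(\chi(a))$.

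There is essentially no obstacle here: once the definition of carrier is unpacked, the claim is a direct diagram chase using naturality of geometric realisation on morphisms of precubical sets plus the commuting square. The only minor points to verify are that $\deg$ is preserved by $\xi$ (so $u$ remains an interior parameter) and that $|\xi|$ and $|\chi|$ act on representatives by the identity on the cube coordinate — both of which are immediate from the definitions in Section \ref{geomreal}.
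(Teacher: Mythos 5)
Your proof is correct and is precisely the ``easy proof'' that the paper itself omits (it only cites the weak-morphisms preprint for it): one applies $f\circ|\xi|=|\chi|\circ g$ to the canonical representative $[c_g(a),u]$, uses $|\xi|([x,u])=[\xi(x),u]$, $|\chi|([a,v])=[\chi(a),v]$ and the fact that $\xi$ preserves degree so that $u$ stays an interior parameter, and then invokes the uniqueness clause in the definition of the carrier. No gap.
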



Images and carriers are related as follows:

\begin{prop} \label{imcarr}
Consider precubical subsets of $X \subseteq P$ and $A \subseteq Q$ and an element $a \in Q$. Then
\begin{itemize}
\item[(i)] $a \in f(X)$ if and only if $c(a) \in X$;
\item[(ii)] $A \subseteq f(c(A))$;
\item[(iii)] $c(f(X)) = X$.
\end{itemize}
\end{prop}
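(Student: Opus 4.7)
The plan is to prove (i) first and use it as the engine for (ii) and (iii). The defining property of the carrier---that $c(a)$ is the unique element of $P$ satisfying $f([c(a),u])=[a,(\tfrac{1}{2},\dots,\tfrac{1}{2})]$ for some $u\in \,]0,1[^{\deg(c(a))}$---combined with the identity $f(|X|)=|f(X)|$ yields (i) at once. If $c(a)\in X$, then $[c(a),u]\in|X|$, so $[a,(\tfrac{1}{2},\dots,\tfrac{1}{2})]\in f(|X|)=|f(X)|$ and hence $a\in f(X)$. Conversely, if $a\in f(X)$, then $[a,(\tfrac{1}{2},\dots,\tfrac{1}{2})]=f([x,u'])$ for some $x\in X$ and $u'\in \,]0,1[^{\deg(x)}$; injectivity of $f$ together with the uniqueness of the representation of a point of $|P|$ forces $x=c(a)$, so $c(a)\in X$.

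Part (ii) is then a one-line consequence of (i): for $a\in A$ we have $c(a)\in c(A)$ by construction of $c(A)$, so (i) applied with $c(A)$ in place of $X$ gives $a\in f(c(A))$. For the first half of (iii), the inclusion $c(f(X))\subseteq X$ is routine: if $p\in c(a)_{\sharp}(\llbracket 0,1\rrbracket^{\otimes \deg(c(a))})$ for some $a\in f(X)$, then (i) yields $c(a)\in X$, and since $X$ is closed under boundary operators, the precubical subset $c(a)_{\sharp}(\llbracket 0,1\rrbracket^{\otimes \deg(c(a))})$ generated by $c(a)$ is contained in $X$; therefore $p\in X$.

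The substantive step is the reverse inclusion $X\subseteq c(f(X))$. Given $p\in X$, I plan to exhibit an element $a\in f(X)$ with $c(a)=p$, which will immediately put $p$ into $c(f(X))$. The case $\deg(p)=0$ is handled by setting $a=f_0(p)\in f_0(X_0)\subseteq f(X)$. For $\deg(p)=n>0$, write $R_p=\llbracket 0,k_1\rrbracket\otimes\cdots\otimes\llbracket 0,k_n\rrbracket$, pick any top cube $r=([i_1,i_1+1],\dots,[i_n,i_n+1])$ of $R_p$, and let $u=\phi_p^{-1}(i_1+\tfrac{1}{2},\dots,i_n+\tfrac{1}{2})$. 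Since $\phi_p$ is a homeomorphism of cubes and $(i_1+\tfrac{1}{2},\dots,i_n+\tfrac{1}{2})$ is an interior point of $|R_p|$, we have $u\in \,]0,1[^n$, and the factorisation $f\circ|p_{\sharp}|=|p_{\flat}|\circ\phi_p$ gives $f([p,u])=[p_{\flat}(r),(\tfrac{1}{2},\dots,\tfrac{1}{2})]$. The uniqueness clause in the definition of the carrier then forces $c(p_{\flat}(r))=p$, while $p_{\flat}(r)\in p_{\flat}(R_p)=f(p_{\sharp}(\llbracket 0,1\rrbracket^{\otimes n}))\subseteq f(X)$.

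The main obstacle is precisely the choice of source point $u$ above. The naive attempt $u=(\tfrac{1}{2},\dots,\tfrac{1}{2})$ does not work in general, because $\phi_p$ may send the midpoint of $[0,1]^n$ to a point of $|R_p|$ lying on an interior wall between top cubes; the resulting image cube then has degree strictly less than $n$ and cannot have $p$ as its carrier. Pulling back the centre of a chosen top cube of $R_p$ through $\phi_p^{-1}$ is the precise adjustment that forces the carrier equation to hold on the nose.
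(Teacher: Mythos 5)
Your proof is correct and follows essentially the same route as the paper: (i) via the canonical representation $[a,(\tfrac12,\dots,\tfrac12)]$ and the uniqueness clause in the definition of the carrier, (ii) as an immediate corollary of (i), and the reverse inclusion in (iii) by picking a top cube $z$ of $R_x$ and showing $c(x_{\flat}(z))=x$. The only cosmetic difference is that the paper gets $c(x_{\flat}(z))=x_{\sharp}(c_{\phi_x}(z))=x_{\sharp}(\iota_n)$ from the naturality of carriers (Proposition \ref{cnat}), whereas you unwind the same computation by hand by pulling the centre of $z$ back through $\phi_x^{-1}$; your closing remark about why $u=(\tfrac12,\dots,\tfrac12)$ would not work is accurate.
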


\begin{proof} (i) Let $u \in ]0,1[$ be the unique element such that $f([c(a),u]) = [a,(\frac{1}{2}, \dots, \frac{1}{2})]$. Then we have $a \in f(X) \Leftrightarrow [a,(\frac{1}{2}, \dots, \frac{1}{2})] \in |f(X)| \Leftrightarrow f([c(a),u]) \in f(|X|) \Leftrightarrow [c(a),u] \in  |X| \Leftrightarrow c(a) \in X$.

(ii) Let $b \in A$. Since $c(b) \in c(A)$, by (i), $b \in f(c(A))$.

(iii) For $b \in f(X)$, $c(b) \in X$ and hence $c(b)_{\sharp} (\llbracket 0, 1 \rrbracket ^ {\otimes \deg (c(b))}) \subseteq X$. It follows that $c(f(X)) = \bigcup \limits_{b \in f(X)} c(b)_{\sharp} (\llbracket 0, 1 \rrbracket ^ {\otimes \deg (c(b))}) \subseteq X$. For the reverse inclusion, consider an element $x \in X$. Suppose first that $x$ is a vertex. Then $x = c(f_0(x)) \in c(f(X))$. Suppose now that $\deg (x) = n >0$.  Consider an element $z \in R_x$ such that $\deg(z) = n$. Then $c(z) = \iota_n$. Hence $x = x_{\sharp}(\iota_n) = x_{\sharp}(c(z)) = c(x_{\flat}(z))$. Since $x_{\flat}(R_x) = f(x_{\sharp}(\llbracket 0,1\rrbracket^{\otimes n}) \subseteq f(X)$, we have $x_{\flat}(z) \in f(X)$ and therefore  $x = c(x_{\flat}(z)) \in c(f(X))$. 
\end{proof}

\begin{prop}\cite{weakmor} \label{rho}
The map $f^{\mathbb I}\colon P^ {\mathbb I} \to Q^ {\mathbb I}$ admits a right inverse $\rho \colon Q^ {\mathbb I} \to P^ {\mathbb I}$. If $\omega$ is a path in $Q$ from $a$ to $b$, then $\rho(\omega )$ is a path in $P$ from $c(a)_{\sharp}(0,\dots, 0)$ to $c(b)_{\sharp}(0,\dots, 0)$.
\end{prop}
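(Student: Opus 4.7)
The plan is to construct $\rho$ explicitly by lifting each path in $Q$ to a precubical path in $P$ using the carrier machinery, and then to verify the inverse relation and the endpoint formula. I would organise the construction by the length of $\omega$ and then reduce the essential content to the length-one case.

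For a path $\omega$ of length $0$ at a vertex $a\in Q_0$, I would simply set $\rho(\omega)$ to be the constant path at $c(a)_{\sharp}(0,\dots,0)\in P_0$, so that the endpoint claim is automatic. For $\omega$ of length $k\geq 1$, I would write $\omega = x_{1\sharp}\cdot x_{2\sharp}\cdots x_{k\sharp}$ with $x_j\in Q_1$ and define $\rho(\omega)=\rho(x_{1\sharp})\cdot \rho(x_{2\sharp})\cdots \rho(x_{k\sharp})$. Since $d_1^1 x_j = d_1^0 x_{j+1}$, the desired endpoint property on each factor $\rho(x_{j\sharp})$ --- from $c(d_1^0 x_j)_{\sharp}(0,\dots,0)$ to $c(d_1^1 x_j)_{\sharp}(0,\dots,0)$ --- makes this concatenation well-defined and yields the global endpoints stated in the proposition.

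The real work is the length-one case. Given $x\in Q_1$ with source $a$ and target $b$, set $c = c(x)$ and $n=\deg(c)\geq 1$. I would analyse the factorisation $f\circ |c_\sharp|=|c_\flat|\circ \phi_c$ on the cube $\llbracket 0,1\rrbracket^{\otimes n}$, locate the unique edge $\widetilde x\in R_c$ with $c_\flat(\widetilde x)=x$, and use Proposition~\ref{cnat} applied to the face inclusions of $\llbracket 0,1\rrbracket^{\otimes n}$ to identify $c(a)$ and $c(b)$ with carriers computed inside the cube; this identifies $c(a)_{\sharp}(0,\dots,0)$ and $c(b)_{\sharp}(0,\dots,0)$ with specific vertices of $c_\sharp(\llbracket 0,1\rrbracket^{\otimes n})\subseteq P$. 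Then I would exhibit a precubical path $\widetilde \alpha$ in $\llbracket 0,1\rrbracket^{\otimes n}$ between the appropriate vertices that, after application of $\phi_c$, reparametrises the edge of $R_c$ giving $x$; setting $\rho(x_\sharp)=c_\sharp\circ \widetilde\alpha$ gives the required lift. The relation $f\circ |c_\sharp|=|c_\flat|\circ \phi_c$ then produces, by restriction to $|\widetilde \alpha|$, a dihomeomorphism $[0,\length(\rho(x_\sharp))]\to [0,1]$ that certifies $f^{\mathbb I}(\rho(x_\sharp))=x_\sharp$, and compatibility of $f^{\mathbb I}$ with concatenation extends this to $f^{\mathbb I}\circ \rho=\mathrm{id}$ on all of $Q^{\mathbb I}$.

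The step I expect to be the main obstacle is the selection of $\widetilde\alpha$ inside the cube $\llbracket 0,1\rrbracket^{\otimes n}$. It must simultaneously satisfy three constraints: it must start and end at the correct corner vertices (those corresponding via $c_\sharp$ to $c(a)_{\sharp}(0,\dots,0)$ and $c(b)_{\sharp}(0,\dots,0)$); it must traverse, in the combinatorial sense, the edge of $R_c$ that maps under $c_\flat$ to $x$; and after pushforward by $c_\sharp$ it must give a precubical path in $P$ whose geometric realisation, seen through $\phi_c$, covers $|x_\sharp|$ without overshoot. Making these three requirements compatible is the combinatorial heart of the argument; once done, concatenation and the naturality of $f^{\mathbb I}$ dispose of everything else.
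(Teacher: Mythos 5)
First, a remark on the comparison itself: the paper gives no proof of this proposition --- it is quoted from \cite{weakmor} --- so your attempt can only be measured against what the statement can consistently mean. Measured that way, there is a genuine gap, and it sits exactly where you locate your ``main obstacle'': the path $\widetilde\alpha$ you need does not exist, because the identity $f^{\mathbb I}\circ\rho=\mathrm{id}_{Q^{\mathbb I}}$ you are driving at is false in general. Take $P=\llbracket 0,1\rrbracket^{\otimes 2}$, let $Q$ be a $2\times 2$ grid $\llbracket 0,2\rrbracket\otimes\llbracket 0,2\rrbracket$ and let $f$ be the dihomeomorphism $[0,1]^2\to[0,2]^2$, $(s,t)\mapsto (2s,2t)$; this is a weak morphism and a homeomorphism. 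The edge $x$ of $Q$ from $(1,0)$ to $(1,1)$ has midpoint $(1,\tfrac12)$, whose preimage $(\tfrac12,\tfrac14)$ lies in the open $2$-cell of $P$, so $\deg(c(x))=2$. For any precubical path $\nu$ in $P$, $f(|\nu(\llbracket 0,k\rrbracket)|)$ lies in the $1$-dimensional subcomplex $|f(P_{\leq 1})|$, which by \ref{imcarr}(i) misses the midpoint of $x$; hence no $\nu$ satisfies $f^{\mathbb I}(\nu)=x_{\sharp}$, and $f^{\mathbb I}$ admits no section. In your construction this appears as follows: the edge $\widetilde x\in R_c$ with $c_\flat(\widetilde x)=x$ has carrier $\iota_n$ with respect to $\phi_c$, so by Lemma \ref{bcb} it lies in $\dot{R}_c$ and its midpoint pulls back under $\phi_c$ to a point of $]0,1[^n$; but $\phi_c$ carries the geometric $1$-skeleton of $\llbracket 0,1\rrbracket^{\otimes n}$ (contained in the boundary for $n\geq 2$) into $|\partial R_c|$, so no precubical path $\widetilde\alpha$ in the unit cube can ``traverse'' $\widetilde x$ after applying $\phi_c$. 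Note also that $f^{\mathbb I}\circ\rho=\mathrm{id}$ is already incompatible with the proposition's own endpoint formula: it would force $f_0(c(a)_\sharp(0,\dots,0))=a$ for every vertex $a\in Q_0$, which fails for the centre vertex of the grid above.

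The only consistent reading --- and the only one the present paper uses, namely in Lemma \ref{lemma1}, where $\rho$ is applied to an arbitrary path of $Q$ and only the endpoint formula is invoked --- is that $\rho$ is defined on all of $Q^{\mathbb I}$, satisfies the endpoint formula, and is inverse to $f^{\mathbb I}$ on the other side, i.e.\ $\rho\circ f^{\mathbb I}=\mathrm{id}_{P^{\mathbb I}}$ (note that $f^{\mathbb I}$ is injective because $f$ is a homeomorphism). The salvageable part of your proposal is the part you treat as routine: the length-$0$ case, the reduction to single edges by concatenation (the matching of intermediate endpoints via $d_1^1x_j=d_1^0x_{j+1}$ is fine), and, for an edge $x$ from $a$ to $b$ with $c=c(x)$, the identification via Proposition \ref{cnat} of $c(a)_\sharp(0,\dots,0)$ and $c(b)_\sharp(0,\dots,0)$ with corners of $c_\sharp(\llbracket 0,1\rrbracket^{\otimes \deg(c)})$ that are comparable because $\phi_c^{-1}$ is monotone, so that a directed path between them through that single cube exists. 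What then still has to be proved is $\rho(f^{\mathbb I}(\nu))=\nu$, i.e.\ that when $\omega=f^{\mathbb I}(\nu)$ the carriers of the edges and vertices of $\omega$ have degree at most $1$ and the construction literally returns $\nu$; your write-up does not address this, since all its effort goes toward the wrong composite.
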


\begin{lem} \label{prelem}
Consider a  precubical subset $B$  of $Q$ and an element $b \in B$. Then $c(b_{\sharp}(0,\dots,0)) \in c(b)_{\sharp}(\llbracket 0,1 \llbracket^ {\otimes \deg(c(b))})$.
\end{lem}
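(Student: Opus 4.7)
\emph{Proof strategy.} Set $n := \deg(c(b))$. The case $n=0$ is immediate: the preceding Remark~(iii) forces $\deg(b)=0$, so $b_{\sharp}(0,\dots,0)=b$ and $c(b)\in c(b)_{\sharp}(\llbracket 0,1\llbracket^{\otimes 0})=\{c(b)\}$. Assume henceforth that $n\geq 1$ and write $R_{c(b)}=\llbracket 0,k_1\rrbracket\otimes\cdots\otimes\llbracket 0,k_n\rrbracket$. The plan is first to reduce, via Proposition~\ref{cnat}, to a statement about the dihomeomorphism $\phi_{c(b)}$, and then to verify it by a dimension argument for order-preserving homeomorphisms of closed boxes.

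By the same Remark~(iii), one can write $b=c(b)_{\flat}(z)$ with $z\in R_{c(b)}$, and there exists $u\in\,]0,1[^n$ with $\phi_{c(b)}(u)=[z,v]$ for some $v\in\,]0,1[^{\deg(z)}$. Using the identification $|R_{c(b)}|=[0,k_1]\times\cdots\times[0,k_n]$, let $w=(j_1,\dots,j_n)\in R_{c(b)}$ denote the starting vertex of $z$. Since $c(b)_{\flat}$ commutes with the boundary operators, $b_{\sharp}(0,\dots,0)=c(b)_{\flat}(w)$. The defining relation $f\circ|c(b)_{\sharp}|=|c(b)_{\flat}|\circ\phi_{c(b)}$ lets me apply Proposition~\ref{cnat} with $g=\phi_{c(b)}$, $\xi=c(b)_{\sharp}$ and $\chi=c(b)_{\flat}$, producing
\[c(b_{\sharp}(0,\dots,0))=c_f(c(b)_{\flat}(w))=c(b)_{\sharp}\bigl(c_{\phi_{c(b)}}(w)\bigr).\]
The task thus reduces to showing $c_{\phi_{c(b)}}(w)\in\llbracket 0,1\llbracket^{\otimes n}$. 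By the definition of the carrier of a vertex under a homeomorphism, this is equivalent to asserting that the point $q:=\phi_{c(b)}^{-1}(w)\in[0,1]^n$ has no coordinate equal to $1$, i.e., $q\in[0,1)^n$.

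To this end, I first observe that $w\in[0,k_1)\times\cdots\times[0,k_n)$: since $u\in\,]0,1[^n$ lies in the topological interior of $[0,1]^n$, $\phi_{c(b)}(u)$ lies in the topological interior $]0,k_1[\times\cdots\times]0,k_n[$ of $|R_{c(b)}|$, and a coordinate-wise analysis of $\phi_{c(b)}(u)=(j_1+\alpha_1,\dots,j_n+\alpha_n)$ (with $\alpha_i=0$ when the $i$-th coordinate of $z$ is a vertex and $\alpha_i\in\,]0,1[$ otherwise) forces $j_i<k_i$ for every $i$. The main step is then to show that $\phi_{c(b)}^{-1}$ sends $[0,k_1)\times\cdots\times[0,k_n)$ into $[0,1)^n$, and I plan to do this via the following invariant: for any point $p$ in a closed box $X=[0,a_1]\times\cdots\times[0,a_n]$, the upper cone $U_X(p):=\{r\in X:r\geq p\}$ equals $[p_1,a_1]\times\cdots\times[p_n,a_n]$ and therefore has topological covering dimension $|\{i:p_i<a_i\}|$; hence $p\in[0,a_1)\times\cdots\times[0,a_n)$ if and only if $\dim U_X(p)=n$. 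Since $\phi_{c(b)}$ is order-preserving with order-preserving inverse, it restricts to a homeomorphism $U_{[0,1]^n}(p)\to U_{|R_{c(b)}|}(\phi_{c(b)}(p))$, so the two cones share the same dimension. Combining the two characterizations yields $\phi_{c(b)}([0,1)^n)=[0,k_1)\times\cdots\times[0,k_n)$, whence $q\in[0,1)^n$, as required. This last dimension-preservation step is the main obstacle; everything else is a careful translation between the carrier of a vertex of $Q$ and the carrier of the corresponding vertex of $R_{c(b)}$ under $\phi_{c(b)}$.
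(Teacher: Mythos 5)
Your proof is correct, and its skeleton coincides with the paper's: both arguments reduce the claim, via Remark~(iii) and Proposition~\ref{cnat} applied to the square $f\circ|c(b)_{\sharp}|=|c(b)_{\flat}|\circ\phi_{c(b)}$, to showing that the carrier under $\phi_{c(b)}$ of the initial vertex $w$ of $z$ lies in $\llbracket 0,1\llbracket^{\otimes n}$, and both then invoke Lemma~\ref{bbu} to translate this into the statement that $\phi_{c(b)}^{-1}(w)$ has no coordinate equal to $1$. Where you diverge is in how you establish that last fact. The paper simply observes that $w\leq\phi_{c(b)}(u)$ (the initial vertex of a cell is below any point of that cell), so that monotonicity of $\phi_{c(b)}^{-1}$ gives $\phi_{c(b)}^{-1}(w)\leq u\in\,]0,1[^n$ and hence $\phi_{c(b)}^{-1}(w)\in[0,1[^n$ --- a one-line order argument. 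You instead prove the stronger statement that $\phi_{c(b)}$ restricts to a bijection $[0,1)^n\to[0,k_1)\times\cdots\times[0,k_n)$, by comparing covering dimensions of upper cones. That argument is valid (the cone $U_X(p)$ is a $d$-cube with $d=|\{i:p_i<a_i\}|$, dihomeomorphisms carry cones to cones, and covering dimension is a topological invariant), but it imports dimension theory where none is needed, much as your preliminary step of locating $w$ in the half-open box uses invariance of the topological interior when the inequality $w\leq\phi_{c(b)}(u)<(k_1,\dots,k_n)$ coordinatewise would already suffice. In short: same reduction, but you prove a global statement about $\phi_{c(b)}$ with heavier machinery where a pointwise monotonicity observation closes the argument. (Your case split on $\deg(c(b))$ rather than $\deg(b)$ is harmless, since your main argument handles $\deg(b)=0$ with $\deg(c(b))>0$ uniformly.)
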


\begin{proof}
Suppose first that $\deg(b) = 0$. Then $b_{\sharp}(0,\dots ,0) = b$ and therefore $$c(b_{\sharp}(0,\dots,0)) = c(b) = c(b)_{\sharp}(\iota_{\deg(c(b))}) \in c(b)_{\sharp}(\llbracket 0,1 \llbracket^ {\otimes \deg(c(b))}).$$

Suppose that $\deg(b) > 0$ and set $n = \deg(c(b))$. Then $b_{\sharp}(0,\dots,0) = d_1^0 \cdots d_1^0b$ and $n > 0$. Let $u \in ]0,1[^n$ be the unique element such that $$f([c(b),u]) = [b,(\tfrac{1}{2},\dots,\tfrac{1}{2})].$$ Consider elements $z \in R_{c(b)}$ and $v \in ]0,1[^{\deg (z)}$ such that $\phi_{c(b)}(u) = [z,v]$. Then $[c(b)_{\flat}(z),v] = |c(b)_{\flat}|\circ \phi_{c(b)}(u) = f\circ |c(b)_{\sharp}|(u) = f([c(b),u]) = [b,(\frac{1}{2},\dots,\frac{1}{2})]$ and hence $\deg(z) = \deg(b)$, $c(b)_{\flat}(z) = b$ and $v = (\frac{1}{2}, \dots, \frac{1}{2})$. Write $$z=(i_1,..,i_{s_1-1},[i_{s_1},i_{s_1}+1], i_{s_1+1},..,i_{s_2-1},[i_{s_2},i_{s_2}+1],.., [i_{s_r},i_{s_r}+1], i_{s_r+1},..,i_n).$$
Then \begin{eqnarray*}\phi_{c(b)}(u) &=& [z,(\tfrac{1}{2}, \dots, \tfrac{1}{2})]\\ 
&= & (i_1,..,i_{s_1-1},i_{s_1}+\tfrac{1}{2}, i_{s_1+1},..,i_{s_2-1},i_{s_2}+\tfrac{1}{2},.., i_{s_r}+\tfrac{1}{2}, i_{s_r+1},..,i_n).
\end{eqnarray*}
Write  $m = \deg(c(d_1^0 \cdots d_1^0z))$ and let $w \in ]0,1[^m$ be the unique element such that $\phi_{c(b)}([c(d_1^0 \cdots d_1^0z),w]) = [d_1^0 \cdots d_1^0z,()] = [z,(0,\dots, 0)] = (i_1, \dots, i_n)$. Since $\phi_{c(b)}$ is a dihomeomorphism, we have $[c(d_1^0 \cdots d_1^0z),w] \leq u$ and hence $[c(d_1^0 \cdots d_1^0z),w] \in  [0,1[^n$. By lemma  \ref{bbu} below, this implies that $c(d_1^0 \cdots d_1^0z) \in \llbracket 0,1 \llbracket^ {\otimes n}$. By  \ref{cnat}, $c(d_1^0 \cdots d_1^0b) = c(c(b)_{\flat}(d_1^0 \cdots d_1^0z)) = c(b)_{\sharp}(c(d_1^0 \cdots d_1^0z))$. The result follows. 
\end{proof}

\begin{lem} \label{bbu}
Consider elements $b \in \llbracket 0,{k_1} \rrbracket\otimes \cdots \otimes \llbracket 0,{k_n} \rrbracket$ $(n, k_1, \dots, k_n \geq 1)$ and $u \in ]0,1[^ {\deg(b)}$. Then 
\begin{itemize}
\item[(i)]
$ b \in  \rrbracket 0,{k_1} \llbracket \otimes \cdots \otimes \rrbracket 0,{k_n} \llbracket \;\;\; \Leftrightarrow [b,u] \in  ]0,k_1[ \times \cdots \times ]0,k_n[$; 
\item[(ii)] $b \in  \rrbracket 0,{k_1} \rrbracket \otimes \cdots \otimes \rrbracket 0,{k_n} \rrbracket    \; \Leftrightarrow  [b,u] \in  ]0,k_1] \times \cdots \times ]0,k_n]$; 
\item[(iii)] $b \in  \llbracket 0,{k_1} \llbracket  \otimes \cdots \otimes \llbracket 0,{k_n} \llbracket  \; \Leftrightarrow [b,u] \in  [0,k_1[ \times \cdots \times [0,k_n[$.
\end{itemize}
\end{lem}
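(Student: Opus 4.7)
\emph{Proof plan for Lemma \ref{bbu}.} The plan is to reduce the three biconditionals to a coordinate-wise check, using the explicit identification of the geometric realisation of a tensor product of intervals with a product of real intervals. Recall from section \ref{geomreal} that the comonoidal map $\psi$ identifies $|\llbracket 0,k_1\rrbracket \otimes \cdots \otimes \llbracket 0,k_n\rrbracket|$ with $[0,k_1] \times \cdots \times [0,k_n]$. Writing $b = (b_1, \dots, b_n)$ with $b_i \in \llbracket 0,k_i\rrbracket_{d_i}$ and $\sum d_i = \deg(b)$, and using the identification $|\llbracket 0,k_i\rrbracket| = [0,k_i]$ from the examples in section \ref{geomreal}, the point $[b,u]$ corresponds to the $n$-tuple $(x_1,\dots,x_n)$ where $x_i = j_i$ if $b_i = j_i$ is a vertex, and $x_i = j_i + u_{s(i)}$ if $b_i = [j_i,j_i+1]$ is an edge (with $u_{s(i)} \in \,]0,1[$ the component of $u$ attached to that edge slot).

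The crucial observation is that whenever $b_i$ is an edge, the coordinate $x_i$ automatically lies in the open interval $]j_i, j_i+1[ \,\subseteq\, ]0,k_i[$, so edge coordinates trivially satisfy each of the open/half-open/closed conditions on the right-hand side of (i), (ii) and (iii); at the same time, an edge always belongs to $\rrbracket 0,k_i\llbracket$, $\rrbracket 0,k_i\rrbracket$ and $\llbracket 0,k_i\llbracket$, so the edge coordinates also satisfy the conditions on the left. Hence each of the three biconditionals reduces to the corresponding condition on the vertex coordinates, where $x_i = j_i$ and the question becomes purely whether $j_i$ avoids $0$, or $k_i$, or both.

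I would therefore carry out the proof as follows. First, set up the identification and the notation above in a short paragraph. Then, for (i), observe that $b \in \rrbracket 0,k_1\llbracket \otimes \cdots \otimes \rrbracket 0,k_n\llbracket$ means precisely that for every $i$ with $b_i$ a vertex one has $0 < j_i < k_i$, which by the above translates exactly to $x_i \in \,]0,k_i[$; combined with the automatic condition at edge coordinates, this gives the equivalence. For (ii), the same argument works with ``$j_i \neq 0$'' in place of ``$0 < j_i < k_i$'', corresponding to $x_i \in \,]0,k_i]$, and for (iii) with ``$j_i \neq k_i$'' corresponding to $x_i \in [0,k_i[$.

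There is no serious obstacle here: the lemma is a direct unpacking of the homeomorphism $\psi$ on cells and of the definitions of the subsets $\rrbracket k,l\llbracket$, $\rrbracket k,l\rrbracket$ and $\llbracket k,l\llbracket$. The only point that requires any attention is making explicit the correspondence between the parameters $u_s$ and the edge slots of $b$, so that the reader sees that edge coordinates always land in open intervals strictly inside $[0,k_i]$; once this is in place, the three statements follow by inspection.
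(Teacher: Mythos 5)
Your plan is correct, and all three equivalences do reduce, exactly as you say, to a coordinate-wise check: writing $b=(b_1,\dots,b_n)$ with $b_i\in\llbracket 0,k_i\rrbracket$, the point $[b,u]$ corresponds to $(x_1,\dots,x_n)$ with $x_i=j_i$ at vertex factors and $x_i=j_i+u_{s(i)}\in\,]j_i,j_i+1[\,\subseteq\,]0,k_i[$ at edge factors, so edge factors satisfy both sides of each biconditional automatically and only the vertex factors matter; since $\rrbracket 0,k_i\llbracket$, $\rrbracket 0,k_i\rrbracket$ and $\llbracket 0,k_i\llbracket$ remove from $\llbracket 0,k_i\rrbracket$ precisely the vertices $\{0,k_i\}$, $\{0\}$ and $\{k_i\}$ respectively, the three statements follow. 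The paper, by contrast, gives no argument at all here: it cites \cite[4.7.1]{weakmor} for part (iii) and declares (i) and (ii) analogous. So your route is not so much different as actually carried out; it is the self-contained, elementary computation that the cited result presumably also rests on. What your version buys is independence from the external reference; what it costs is the small amount of bookkeeping you correctly identify as the only delicate point, namely pinning down the bijection between the coordinates of $u$ and the edge slots of $b$. That correspondence is exactly the one the paper sets up when identifying $|\llbracket k_1,l_1\rrbracket\otimes\cdots\otimes\llbracket k_n,l_n\rrbracket|$ with $[k_1,l_1]\times\cdots\times[k_n,l_n]$ via $\psi$, transported to lower-dimensional cells by the identifications $(d_i^kx,u)\sim(x,\delta_i^k(u))$ in the geometric realisation, so in a written-out version you should say this explicitly rather than leave it at ``the component of $u$ attached to that edge slot''.
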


\begin{proof}
The last statement is \cite[4.7.1]{weakmor}. The proofs of the other statements are analogous.
\end{proof}

\subsection{Top cubes} 

We say that an element $z\in P$ is a \emph{top cube} of a precubical subset $X \subseteq P$ if $z\in X$ and there does not exist any element $x\in X$ such that $\deg(z) < \deg (x)$ and $z \in x_{\sharp}(\llbracket 0,1 \rrbracket^{\otimes \deg(x)})$.

\begin{lem} \label{maxa}
Let $A$ be a precubical subset of $Q$ and $z\in P$ be a top cube of $c(A)$. Then there exists an element $a \in A$ such that $z = c(a)$.
\end{lem}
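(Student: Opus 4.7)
The plan is to unpack the definition of $c(A)$, pick a witness $a\in A$ with $z\in c(a)_{\sharp}(\llbracket 0,1\rrbracket^{\otimes \deg(c(a))})$, and then use the top cube hypothesis to force $z = c(a)$.

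First I would observe that, by the definition
$$c(A) = \bigcup_{a\in A} c(a)_{\sharp}(\llbracket 0,1\rrbracket^{\otimes \deg(c(a))}),$$
the assumption $z\in c(A)$ provides some $a\in A$ such that $z\in c(a)_{\sharp}(\llbracket 0,1\rrbracket^{\otimes \deg(c(a))})$. Since $c(a)_{\sharp}$ is a morphism of precubical sets, it preserves degree, so every element of $c(a)_{\sharp}(\llbracket 0,1\rrbracket^{\otimes n})$ (where $n = \deg(c(a))$) has degree at most $n$; hence $\deg(z)\leq \deg(c(a))$.

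Next I would note that $c(a)$ itself belongs to $c(A)$, since $c(a) = c(a)_{\sharp}(\iota_n) \in c(a)_{\sharp}(\llbracket 0,1\rrbracket^{\otimes n}) \subseteq c(A)$. Now if we had $\deg(z) < \deg(c(a))$, then taking $x = c(a)\in c(A)$ would contradict the hypothesis that $z$ is a top cube of $c(A)$, because $z\in c(a)_{\sharp}(\llbracket 0,1\rrbracket^{\otimes \deg(c(a))})$. Therefore $\deg(z) = \deg(c(a)) = n$.

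Finally, the only element of degree $n$ in $\llbracket 0,1\rrbracket^{\otimes n}$ is $\iota_n$, so the only element of degree $n$ in the image $c(a)_{\sharp}(\llbracket 0,1\rrbracket^{\otimes n})$ is $c(a)_{\sharp}(\iota_n) = c(a)$. Consequently $z = c(a)$, which is exactly what we needed. There is no real obstacle: the argument is entirely a degree-count applied to the top cube property, and the work has already been done in setting up the definitions of $c(A)$ and of the morphism $c(a)_{\sharp}$.
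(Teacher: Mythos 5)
Your proof is correct and follows essentially the same route as the paper's: pick a witness $a\in A$ with $z\in c(a)_{\sharp}(\llbracket 0,1\rrbracket^{\otimes \deg(c(a))})$ from the definition of $c(A)$, use the top cube hypothesis to get $\deg(z)\geq\deg(c(a))$, and conclude $z=c(a)$. You merely spell out the final degree count (that $\iota_n$ is the unique degree-$n$ element of the precubical $n$-cube) which the paper leaves implicit.
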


\begin{proof}
Since $z \in c(A) = \bigcup \limits _{a \in A}c(a)_{\sharp}(\llbracket 0,1\rrbracket^{\otimes \deg(c(a))})$, there exists an element $a \in A$  such that $z \in c(a)_{\sharp}(\llbracket 0,1\rrbracket^{\otimes \deg(c(a))})$. Since $z$ is a top cube of $c(A)$, $\deg (z) \geq \deg (c(a))$. Thus, $z = c(a)$. 
\end{proof}

\begin{defin}
Let $A$ be a precubical subset of $Q$ and $z$ be a top cube of $c(A)$. We set $A^-_z = \{a\in A: c(a) \not=z\}$.
\end{defin}

\begin{prop} \label{canotin}
Let $A$ be a precubical subset of $Q$ and  $z$ be a top cube of $c(A)$. Then $A^-_z$ is a precubical subset of $A$ and $z \notin c(A^-_z)$. 
\end{prop}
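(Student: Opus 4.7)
The plan is to reduce both assertions to one monotonicity property of carriers under face inclusions, namely: if $b \in a_{\sharp}(\llbracket 0,1\rrbracket^{\otimes \deg(a)})$ in $Q$, then $c(b) \in c(a)_{\sharp}(\llbracket 0,1\rrbracket^{\otimes \deg(c(a))})$. To establish this I would unpack the defining equation of $c(a)$: writing $y = c(a)$ and picking $u \in ]0,1[^{\deg(y)}$ with $f([y,u]) = [a,(\tfrac{1}{2},\dots,\tfrac{1}{2})]$, the identity $|y_{\flat}|\circ \phi_y(u) = f\circ |y_{\sharp}|(u)$ yields $a = y_{\flat}(z)$ for some $z \in R_y$ (exactly as at the start of the proof of Lemma~\ref{prelem}). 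Hence $a$, and therefore the whole precubical subset $a_{\sharp}(\llbracket 0,1\rrbracket^{\otimes \deg(a)})$, lies inside $y_{\flat}(R_y)$. By Remark (iii) of section~\ref{wm}, $y_{\flat}(R_y) = f(y_{\sharp}(\llbracket 0,1\rrbracket^{\otimes \deg(y)}))$, so $b \in f(c(a)_{\sharp}(\llbracket 0,1\rrbracket^{\otimes \deg(c(a))}))$, and Proposition~\ref{imcarr}(i) then gives $c(b) \in c(a)_{\sharp}(\llbracket 0,1\rrbracket^{\otimes \deg(c(a))})$.

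The second ingredient will be a short degree-pinching argument using the top cube hypothesis: whenever $a \in A$ and $z \in c(a)_{\sharp}(\llbracket 0,1\rrbracket^{\otimes \deg(c(a))})$, dimensional reasons give $\deg(z) \leq \deg(c(a))$, while $c(a) \in c(A)$ together with the top cube condition on $z$ forces $\deg(z) \geq \deg(c(a))$. Thus $\deg(z) = \deg(c(a))$, and since $\iota_{\deg(c(a))}$ is the unique top-dimensional element of $\llbracket 0,1\rrbracket^{\otimes \deg(c(a))}$ and satisfies $c(a)_{\sharp}(\iota_{\deg(c(a))}) = c(a)$, one concludes $z = c(a)$.

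With these two ingredients both statements fall out. For the precubical subset assertion, I would take $a \in A^-_z$ and a face $b = d^k_i a$ of $a$ (the case $\deg(a) = 0$ is vacuous); then $b \in A$ since $A$ is a precubical subset, and monotonicity gives $c(b) \in c(a)_{\sharp}(\llbracket 0,1\rrbracket^{\otimes \deg(c(a))})$. If $c(b) = z$, degree-pinching would force $z = c(a)$, contradicting $a \in A^-_z$; hence $b \in A^-_z$. For the non-membership assertion, if $z$ lay in $c(A^-_z) = \bigcup_{a \in A^-_z} c(a)_{\sharp}(\llbracket 0,1\rrbracket^{\otimes \deg(c(a))})$, then $z$ would belong to some $c(a)_{\sharp}(\llbracket 0,1\rrbracket^{\otimes \deg(c(a))})$ with $a \in A^-_z$, and degree-pinching would again produce the contradiction $z = c(a)$.

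The main obstacle I expect is the monotonicity claim for carriers under face inclusions; everything else is bookkeeping driven by the top cube hypothesis. Reformulating the membership $c(b) \in c(a)_{\sharp}(\llbracket 0,1\rrbracket^{\otimes \deg(c(a))})$ as $b \in f(c(a)_{\sharp}(\llbracket 0,1\rrbracket^{\otimes \deg(c(a))}))$ via Proposition~\ref{imcarr}(i), and then recognising the right-hand side via Remark (iii) of section~\ref{wm} as the precubical subset $y_{\flat}(R_y)$ that contains $a$, is the key move that keeps the argument short.
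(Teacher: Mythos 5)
Your proof is correct and follows essentially the same route as the paper: both hinge on showing that the carrier of a face of $a$ lies in $c(a)_{\sharp}(\llbracket 0,1\rrbracket^{\otimes \deg(c(a))})$ and then pinching degrees against the top-cube hypothesis to force $z = c(a)$. The only cosmetic difference is that you obtain the key membership from Proposition \ref{imcarr}(i) together with Remark (iii), whereas the paper writes $a = c(a)_{\flat}(y)$ and invokes the naturality statement of Proposition \ref{cnat}.
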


\begin{proof}
Consider an element $a \in A^-_z$ such that $\deg(a) > 0$. Suppose that $d_i^ka \notin A^ -_z$. Then $c(d_i^ka) = z$. Since $c(a) \in c(a)_{\sharp}(\llbracket 0,1\rrbracket ^{\otimes \deg(c(a))})$, $a \in f(c(a)_{\sharp}(\llbracket 0,1\rrbracket ^{\otimes \deg(c(a))})) = c(a)_{\flat}(R_{c(a)})$. Write $a = c(a)_{\flat}(y)$. Then $$z = c(d_i^ka) = c(d_i^kc(a)_{\flat}(y)) = c(c(a)_{\flat}(d_i^ky)) = c(a)_{\sharp}(c(d_i^ky))\in c(a)_{\sharp}(\llbracket 0,1\rrbracket ^{\otimes \deg(c(a))}).$$ Since $z$ is a top cube of $c(A)$ and $c(a) \in c(A)$, we have $\deg(z) \geq \deg (c(a))$ and hence $z = c(a)$. This contradicts the fact that $a \in A^-_z$. It follows that $A^-_z$ is a precubical subset of $A$.

Suppose that $z \in c(A^-_z) = \bigcup \limits_{a \in A^-_z} c(a)_{\sharp}(\llbracket 0,1\rrbracket ^{\otimes \deg(c(a))})$. Then there exists an  element $a \in A^-_z$ such that $z \in c( a)_{\sharp}(\llbracket 0,1\rrbracket ^{\otimes \deg(c(a))})$. Since $a \in A^-_z$, $c(a) \not= z$ and therefore $\deg (z)  < \deg(c(a))$. This is impossible because $z$ is a top cube of $c(A)$.
\end{proof}

\begin{lem} \label{bcb}
Let $\phi \colon |\llbracket 0,1 \rrbracket ^{\otimes n}|  \to |\llbracket 0,k_1\rrbracket \otimes \cdots \otimes \llbracket 0, k_n\rrbracket|$ $(n>0)$ be a weak morphism that is a homeomorphism. Consider an element $b \in \llbracket 0,{k_1} \rrbracket\otimes \cdots \otimes \llbracket 0,{k_n} \rrbracket  $. Then $b \in  \rrbracket 0, k_1 \llbracket \otimes \cdots \otimes \rrbracket 0,k_n \llbracket$ if and only if $c(b) = \iota_n$.
\end{lem}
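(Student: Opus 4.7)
The strategy is to reduce the statement to the interior-preservation property of the dihomeomorphism $\phi$ and then combine it with Lemma \ref{bbu}(i). Throughout, I would use the identifications from Section \ref{geomreal} to view $|\llbracket 0,1\rrbracket^{\otimes n}|$ as $[0,1]^n$ and $|\llbracket 0,k_1\rrbracket\otimes\cdots\otimes\llbracket 0,k_n\rrbracket|$ as $[0,k_1]\times\cdots\times[0,k_n]$, so that $\phi$ becomes an order-preserving homeomorphism with order-preserving inverse between these two cubes.

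The key preliminary claim is that $\phi$ restricts to a bijection between the open interiors $]0,1[^n$ and $]0,k_1[\times\cdots\times]0,k_n[$. I would argue this order-theoretically: a point $p$ of either cube lies in the open interior iff there exist $y,z$ in the same cube with $y<p<z$ coordinatewise. Since $\phi$ and $\phi^{-1}$ both preserve $\leq$ and are bijections, they preserve strict inequality $<$ as well, so the existence of such bounds transfers through $\phi$ in both directions. (Alternatively, invariance of domain applied to the homeomorphism of $n$-manifolds with corners does the job.) With this, both implications of the lemma follow directly. For the forward direction, if $b \in \rrbracket 0,k_1\llbracket\otimes\cdots\otimes\rrbracket 0,k_n\llbracket$ then Lemma \ref{bbu}(i) places $[b,(\tfrac12,\dots,\tfrac12)]$ in $]0,k_1[\times\cdots\times]0,k_n[$, so $\phi^{-1}([b,(\tfrac12,\dots,\tfrac12)])$ lies in $]0,1[^n$; writing this preimage, by the unique cell decomposition of $|\llbracket 0,1\rrbracket^{\otimes n}|$, as $[c(b),u]$ with $u \in ]0,1[^{\deg(c(b))}$, the membership in $]0,1[^n$ forces $\deg(c(b))=n$ and hence $c(b)=\iota_n$, the unique $n$-cube. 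For the backward direction, if $c(b)=\iota_n$, the defining property of the carrier supplies $u \in ]0,1[^n$ with $\phi([\iota_n,u]) = [b,(\tfrac12,\dots,\tfrac12)]$; under the identification $[\iota_n,u]$ is just the point $u \in ]0,1[^n$, so interior preservation gives $[b,(\tfrac12,\dots,\tfrac12)] \in ]0,k_1[\times\cdots\times]0,k_n[$, and Lemma \ref{bbu}(i) concludes that $b \in \rrbracket 0,k_1\llbracket\otimes\cdots\otimes\rrbracket 0,k_n\llbracket$.

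The only nontrivial step is the interior-preservation claim; once that is in hand, everything reduces to unwinding the definitions of the carrier and applying Lemma \ref{bbu}(i). I expect the main obstacle to be writing the interior-preservation argument cleanly and in a way consistent with the paper's style, since it can be phrased either via the dihomeomorphism property (preservation of strict inequalities) or via the more topological invariance of domain; either is short, but a careful statement of the unique cell representation on the domain side is what ties it back to the carrier $c(b)$.
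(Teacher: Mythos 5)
Your proof is correct and follows essentially the same route as the paper, which likewise reduces everything to the fact that the homeomorphism $\phi$ carries the open interior $]0,1[^n$ onto $]0,k_1[\times\cdots\times]0,k_n[$ (the paper dispatches this with the single phrase ``since $\phi$ is a homeomorphism'', i.e.\ invariance of domain) and then applies Lemma \ref{bbu}(i) on both sides together with the unique representation $[c(b),u]$. The only caveat concerns your order-theoretic aside: preservation of $\leq$ plus bijectivity yields $\phi(y)\leq\phi(p)$ with $\phi(y)\neq\phi(p)$, not strict inequality in \emph{every} coordinate, so the characterisation of interior points via $y<p<z$ does not transfer purely order-theoretically; you should rely on the invariance-of-domain justification you already offer.
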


\begin{proof}
Let $u \in ]0,1[^{\deg(c(b))}$ be the element such that  $\phi([c(b),u]) = [b,(\frac{1}{2}, \dots, \frac{1}{2})]$. Since $\phi$ is a homeomorphism, $[b,(\frac{1}{2}, \dots, \frac{1}{2})] \in ]0,k_1[ \times \cdots \times ]0,k_m[$ if and only if $[c(b),u] \in ]0,1[^n$. Lemma \ref{bbu} implies the result.
\end{proof}

\begin{lem} \label{chiinj}
For any element $x\in P$ of degree $\geq 1$, the restriction of $x_{\flat}$ to  $\dot{R}_x$ is injective.
\end{lem}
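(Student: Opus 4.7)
The plan is to reduce the injectivity of $x_\flat$ on $\dot{R}_x$ to an injectivity statement about $|x_\sharp|$ on the open top cell $]0,1[^n$ of the precubical $n$-cube, by exploiting the defining factorisation $f \circ |x_\sharp| = |x_\flat| \circ \phi_x$ together with the fact that $f$, being a homeomorphism, is injective.

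First I would take $a, b \in \dot{R}_x$ with $x_\flat(a) = x_\flat(b) =: y$ and, using that $x_\flat$ is graded, note that $\deg(a) = \deg(b) = \deg(y) =: k$. Choosing any $u \in ]0,1[^k$, one has $|x_\flat|([a,u]) = [y,u] = |x_\flat|([b,u])$; rewriting $|x_\flat|$ as $f \circ |x_\sharp| \circ \phi_x^{-1}$ and cancelling $f$ on the left by injectivity gives $|x_\sharp|(\phi_x^{-1}([a,u])) = |x_\sharp|(\phi_x^{-1}([b,u]))$.

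Next I would argue that both $\phi_x^{-1}([a,u])$ and $\phi_x^{-1}([b,u])$ lie in $]0,1[^n \subset |\llbracket 0,1 \rrbracket^{\otimes n}|$. By lemma \ref{bbu}(i), the hypothesis $a, b \in \dot{R}_x$ and $u \in ]0,1[^k$ places $[a,u]$ and $[b,u]$ in $]0,l_1[\times\cdots\times]0,l_n[$, and $\phi_x$ restricts to a bijection between $]0,1[^n$ and this open box. This last point is the main subtlety I expect, but it follows from the order-theoretic characterisation of interior points in a product of closed intervals (namely as those points admitting elements strictly above and strictly below in the coordinatewise order), a condition preserved by the dihomeomorphism $\phi_x$ and its inverse. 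On $]0,1[^n$, the map $|x_\sharp|$ sends $[\iota_n, w]$ to $[x, w]$, which is injective by the uniqueness of representation in $|P|$, applicable because $\deg(x) = n$ and $w \in ]0,1[^n$.

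Putting these pieces together, $\phi_x^{-1}([a,u]) = \phi_x^{-1}([b,u])$, hence $[a,u] = [b,u]$ in $|R_x|$, and a final application of uniqueness of representation in $|R_x|$, valid because $u \in ]0,1[^{\deg(a)} = ]0,1[^{\deg(b)}$, yields $a = b$. Everything outside the interior-preservation step is just unfolding the definitions and invoking uniqueness of representation, so I expect the only real technical point to be that one observation about $\phi_x$.
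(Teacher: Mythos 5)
Your argument is correct and is essentially the paper's own proof run in contrapositive form: both reduce the claim, via the factorisation $f\circ|x_{\sharp}|=|x_{\flat}|\circ\phi_x$ and the injectivity of $f$, to the injectivity of $w\mapsto[x,w]$ on $]0,1[^n$, using Lemma \ref{bbu}(i) to place the points in the open box and the fact that $\phi_x$ carries $]0,1[^n$ onto $]0,l_1[\times\cdots\times]0,l_n[$ (the paper packages this last step as Lemma \ref{bcb} in the language of carriers). The one caveat is your order-theoretic justification of interior-preservation -- ``admitting elements strictly above and strictly below'' is not quite a correct characterisation of interior points unless ``strictly'' means strict in every coordinate -- but the claim itself is immediate from the fact that a homeomorphism of compact manifolds with boundary preserves interiors, which is the level of justification the paper itself uses.
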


\begin{proof}
Let $\deg(x) = n$ and $R_x = \llbracket 0,k_1\rrbracket \otimes \cdots \otimes \llbracket 0, k_n\rrbracket$. Let $a,b \in \dot{R}_x$ be distinct elements of the same degree. Then $[a,(\tfrac{1}{2}, \dots, \tfrac{1}{2})]\not=  [b,(\tfrac{1}{2}, \dots, \tfrac{1}{2})] \in ]0,k_1[\times \cdots \times ]0,k_n[$ and therefore 
$\phi_x^{-1}([a,(\tfrac{1}{2}, \dots, \tfrac{1}{2})])\not= \phi_x^ {-1}([b,(\tfrac{1}{2}, \dots, \tfrac{1}{2})]) \in ]0,1[^n.$ Let $u \in ]0,1[^{\deg(c(a))}$ and $v \in ]0,1[^{\deg(c(b))}$ be the uniquely determined elements such that $\phi_x ([c(a),u]) = [a,(\tfrac{1}{2}, \dots, \tfrac{1}{2})]$ and $\phi_x ([c(b),v]) = [b,(\tfrac{1}{2}, \dots, \tfrac{1}{2})]$. 
By \ref{bcb}, $c(a) =  c(b) = \iota _n$. Thus, $u \not=v$ and hence $|x_{\sharp}|([c(a),u])\not= |x_{\sharp}|([c(b),v])$. Since $f$ is injective, it follows that $|x_{\flat}|([a,(\tfrac{1}{2}, \dots, \tfrac{1}{2})])\not= |x_{\flat}|([b,(\tfrac{1}{2}, \dots, \tfrac{1}{2})])$ and hence that $x_{\flat}(a) \not= x_{\flat}(b)$.
\end{proof}

\begin{prop} \label{pushout}
Let $A$ be a precubical subset of $Q$ and  $z$ be a top cube of $c(A)$ of degree $n>0$. Then the diagram 
$$\xymatrix{z_{\flat}^{-1}(A^-_z) \ar^{z_{\flat}}[r] \ar@{^{(}->}+<0ex,-2ex>;[d] & A^-_z \ar@{^{(}->}+<0ex,-2ex>;[d]\\ z_{\flat}^{-1}(A) \ar^{z_{\flat}}[r]& A
}$$ is a push out of precubical sets.
\end{prop}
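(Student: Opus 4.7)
Since precubical sets form a presheaf category, colimits are computed degreewise in \textbf{Set}, so I will verify the pushout property one degree at a time. In degree $m$, the claim reduces to two statements: (i) every $a \in A_m$ is either in $(A^-_z)_m$ or equals $z_{\flat}(y)$ for some $y \in z_{\flat}^{-1}(A)_m$; and (ii) any two elements of $(A^-_z)_m \sqcup z_{\flat}^{-1}(A)_m$ sharing an image in $A_m$ are equivalent under the relation generated by $a \sim z_{\flat}(a)$ for $a \in z_{\flat}^{-1}(A^-_z)_m$.

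\textbf{Key identity and coverage.} The central tool will be the equivalence
\[ c(z_{\flat}(a)) = z \iff a \in \dot{R}_z \qquad (a \in R_z). \]
I will derive it by applying Proposition \ref{cnat} to the square $f \circ |z_{\sharp}| = |z_{\flat}| \circ \phi_z$ (with $\xi = z_{\sharp}$, $\chi = z_{\flat}$, $g = \phi_z$), which yields $c(z_{\flat}(a)) = z_{\sharp}(c_{\phi_z}(a))$; since $z_{\sharp}$ preserves degree and $z_{\sharp}(\iota_n) = z$, the right-hand side equals $z$ exactly when $c_{\phi_z}(a) = \iota_n$, and by Lemma \ref{bcb} this is equivalent to $a \in \dot{R}_z$. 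Coverage (i) then follows easily: given $a \in A$, either $c(a) \neq z$ (so $a \in A^-_z$), or $c(a) = z$, in which case Proposition \ref{imcarr}(i) applied to $X = z_{\sharp}(\llbracket 0,1\rrbracket^{\otimes n})$ places $a$ in $f(X) = z_{\flat}(R_z)$, and any preimage automatically lies in $z_{\flat}^{-1}(A)$.

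\textbf{Separation and main obstacle.} For (ii) I will distinguish three cases. Two elements of $(A^-_z)_m$ with a common image in $A_m$ must coincide because $A^-_z \hookrightarrow A$ is an inclusion. If $a_1 \in A^-_z$ and $a_2 \in z_{\flat}^{-1}(A)$ satisfy $a_1 = z_{\flat}(a_2)$, then $z_{\flat}(a_2) \in A^-_z$, so $a_2 \in z_{\flat}^{-1}(A^-_z)$ and the generator identifies $a_2$ with $z_{\flat}(a_2) = a_1$. Finally, if $a_1, a_2 \in z_{\flat}^{-1}(A)$ share a common image $a = z_{\flat}(a_1) = z_{\flat}(a_2) \in A$, I split on whether $a \in A^-_z$: if so, both $a_i$ lie in $z_{\flat}^{-1}(A^-_z)$ and the chain $a_1 \sim z_{\flat}(a_1) = z_{\flat}(a_2) \sim a_2$ suffices; otherwise $c(a) = z$, so by the key identity both $a_i$ lie in $\dot{R}_z$, and Lemma \ref{chiinj} forces $a_1 = a_2$. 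This last subcase is the only genuinely delicate step: the pushout relation cannot bridge such a coincidence, so one must use the naturality of carriers together with Lemma \ref{bcb} to confine the preimages to the open part of $R_z$ and then invoke the injectivity of $z_{\flat}$ there.
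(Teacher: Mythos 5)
Your proof is correct and follows essentially the same route as the paper's: you reduce to a degreewise statement about sets and then use the carrier naturality $c(z_{\flat}(a)) = z_{\sharp}(c_{\phi_z}(a))$ from Proposition \ref{cnat} together with Lemmas \ref{bcb} and \ref{chiinj} to show that $z_{\flat}$ is injective on the preimage of $B = \{a \in A : c(a) = z\}$, and Proposition \ref{imcarr}(i) to get surjectivity onto $B$. The paper packages the same ingredients slightly differently, by exhibiting $z_{\flat}^{-1}(A)$ as the disjoint union $z_{\flat}^{-1}(A^-_z) \sqcup z_{\flat}^{-1}(B)$ and showing $z_{\flat}\colon z_{\flat}^{-1}(B) \to B$ is a bijection, but this is the same argument as your coverage/separation analysis.
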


\begin{proof}
Let $R_z = \llbracket 0,k_1\rrbracket \otimes \cdots \otimes \llbracket 0, k_n\rrbracket$. Since $A$ is the disjoint union of $A^-_z$ and the graded set $B = \{a \in A : c(a) = z\}$,  $z_{\flat}^{-1}(A)$ is the disjoint union of $z_{\flat}^{-1}(A^-_z)$ and $z_{\flat}^{-1}(B)$. We have $z_{\flat} ^{-1}(B) \subseteq \dot{R}_z$. Indeed, if $y \in z_{\flat}^{-1}(B)$, then $z_{\sharp}(c(y)) = c(z_{\flat}(y)) = z$. This implies that $c(y) = \iota_n$ and hence, by lemma \ref{bcb}, that $y \in \rrbracket 0,k_1\llbracket  \otimes \cdots \otimes \rrbracket 0,k_n\llbracket$. By lemma \ref{chiinj}, it follows that $z_{\flat} \colon z_{\flat}^{-1}(B) \to B$ is injective. This map is also surjective because for $b \in B$, $c(b) = z \in z_{\sharp}(\llbracket 0,1\rrbracket^{\otimes n})$ and hence $b \in f(z_{\sharp}(\llbracket 0,1\rrbracket^{\otimes n})) = z_{\flat}(R_z)$. It follows that the diagram of the statement is a push out of graded sets. This implies that it is a push out of precubical sets.
\end{proof}

\subsection{Broken cubes}

Let $A$ be a precubical subset of $Q$. An $n$-cube $x\in P$ is said to be \emph{broken in $A$} if $x \in c(A)$  and $f(x_{\sharp}(\llbracket 0,1\rrbracket^ {\otimes n})) \not \subseteq A$.

\begin{prop} \label{subbroken}
Let $z$ be a top cube of $c(A)$ and $x \in P$ be an $n$-cube that is broken in $A^-_z$. Then $x$ is broken in $A$. 
\end{prop}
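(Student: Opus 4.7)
The plan is to work directly from the definition: we need $x \in c(A)$ and $f(x_{\sharp}(\llbracket 0,1\rrbracket^{\otimes n})) \not\subseteq A$. The first conclusion is immediate, since $A^-_z \subseteq A$ implies $c(A^-_z) \subseteq c(A)$ by the monotonicity remark after the definition of carriers, so $x \in c(A^-_z) \subseteq c(A)$.

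For the second conclusion, I would unpack the failure of inclusion into $A^-_z$. Pick $a \in f(x_{\sharp}(\llbracket 0,1\rrbracket^{\otimes n}))$ with $a \notin A^-_z$. By the definition of $A^-_z$, either $a \notin A$ or $a \in A$ and $c(a) = z$. In the first case we are done. I want to show the second case cannot occur.

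Suppose for contradiction that $a \in A$ and $c(a) = z$. By Proposition \ref{imcarr}(i) applied to $X = x_{\sharp}(\llbracket 0,1\rrbracket^{\otimes n})$, the condition $a \in f(x_{\sharp}(\llbracket 0,1\rrbracket^{\otimes n}))$ yields $z = c(a) \in x_{\sharp}(\llbracket 0,1\rrbracket^{\otimes n})$. Since every element of $x_{\sharp}(\llbracket 0,1\rrbracket^{\otimes n})$ has degree at most $n = \deg(x)$, we have $\deg(z) \leq n$. On the other hand $x \in c(A^-_z) \subseteq c(A)$ and $z$ is a top cube of $c(A)$ containing $x$ as a witness, so the top cube property forbids $\deg(z) < \deg(x) = n$. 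Hence $\deg(z) = n$, which forces $z = x_{\sharp}(\iota_n) = x$.

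This gives $x = z \in c(A^-_z)$, directly contradicting Proposition \ref{canotin}, which asserts $z \notin c(A^-_z)$. Therefore the second case is impossible, so we are in the first case and some element of $f(x_{\sharp}(\llbracket 0,1\rrbracket^{\otimes n}))$ lies outside $A$. This shows $x$ is broken in $A$. The main conceptual step is the reduction using Proposition \ref{imcarr}(i) to transfer the membership $a \in f(x_{\sharp}(\cdots))$ into a carrier-level statement $c(a) \in x_{\sharp}(\cdots)$; once that is in hand, the top-cube property of $z$ and Proposition \ref{canotin} close the argument cleanly.
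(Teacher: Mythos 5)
Your proof is correct and follows essentially the same route as the paper: both arguments take an element $a \in f(x_{\sharp}(\llbracket 0,1\rrbracket^{\otimes n}))\setminus A^-_z$, use Proposition \ref{imcarr}(i) to get $c(a)\in x_{\sharp}(\llbracket 0,1\rrbracket^{\otimes n})$, and derive a contradiction with Proposition \ref{canotin}. The only cosmetic difference is at the very end: the paper concludes $z\in x_{\sharp}(\llbracket 0,1\rrbracket^{\otimes n})\subseteq c(A^-_z)$ directly, whereas you first pin down $\deg(z)=n$ via the top-cube property to get $z=x\in c(A^-_z)$; both steps are valid and land in the same place.
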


\begin{proof}
Suppose that $x$ is not broken in $A$. Since $x \in c(A^-_z) \subseteq c(A)$, we have $f(x_{\sharp}(\llbracket 0,1\rrbracket^ {\otimes n})) \subseteq A$. It follows that there exists an element $b \in f(x_{\sharp}(\llbracket 0,1\rrbracket^ {\otimes n}))$ such that $b \not \in A^-_z$ but $b \in A$. Since $b \in f(x_{\sharp}(\llbracket 0,1\rrbracket^ {\otimes n}))$, $c(b) \in x_{\sharp}(\llbracket 0,1\rrbracket^ {\otimes n})$.  Since $b \notin A^-_z$, $c(b) = z$. It follows that $z \in x_{\sharp}(\llbracket 0,1\rrbracket^ {\otimes n}) \subseteq c(A^-_z)$. By proposition \ref{canotin}, this is impossible.
\end{proof}

\begin{prop} \label{brokenmax}
Suppose that $A$ is finite and that some element of $P$ is broken in $A$. Then there exists top cube of $c(A)$ that is broken in $A$. 
\end{prop}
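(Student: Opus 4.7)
The plan is to pick a broken cube of maximal degree and verify that maximality forces it to be a top cube. First I observe that $c(A)=\bigcup_{a\in A}c(a)_{\sharp}(\llbracket 0,1\rrbracket^{\otimes \deg(c(a))})$ is a finite union of finite sets, hence finite. The hypothesis gives at least one broken cube, and the set of broken cubes in $A$ is contained in $c(A)$, so it is a nonempty finite set. I therefore choose a broken cube $z\in P$ of maximal degree among all broken cubes in $A$, and set $n=\deg(z)$.

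I claim $z$ is a top cube of $c(A)$. Suppose not. Since $z\in c(A)$, there exists $y\in c(A)$ with $\deg(y)>n$ and $z\in y_{\sharp}(\llbracket 0,1\rrbracket^{\otimes \deg(y)})$. Write $z=y_{\sharp}(w)$ for some $w\in \llbracket 0,1\rrbracket^{\otimes \deg(y)}$ of degree $n$. By uniqueness of the morphism associated with an element, $z_{\sharp}=y_{\sharp}\circ w_{\sharp}$ (both morphisms send $\iota_n$ to $z$). Consequently
\[
z_{\sharp}(\llbracket 0,1\rrbracket^{\otimes n})=y_{\sharp}(w_{\sharp}(\llbracket 0,1\rrbracket^{\otimes n}))\subseteq y_{\sharp}(\llbracket 0,1\rrbracket^{\otimes \deg(y)}).
\]

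Applying $f$ and using the monotonicity of images under weak morphisms (remark (ii) following the definition of $f(X)$), I obtain
\[
f(z_{\sharp}(\llbracket 0,1\rrbracket^{\otimes n}))\subseteq f(y_{\sharp}(\llbracket 0,1\rrbracket^{\otimes \deg(y)})).
\]
Since $z$ is broken, $f(z_{\sharp}(\llbracket 0,1\rrbracket^{\otimes n}))\not\subseteq A$, so the larger set $f(y_{\sharp}(\llbracket 0,1\rrbracket^{\otimes \deg(y)}))$ is also not contained in $A$. Combined with $y\in c(A)$, this shows $y$ is broken in $A$, contradicting the maximality of $\deg(z)$. Hence $z$ must be a top cube of $c(A)$, which completes the proof.

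The argument is essentially routine; the only conceptual point to get right is the passage ``$z$ broken and $z$ lies in a bigger cube $y$ implies $y$ broken'', which hinges on the multiplicativity $z_{\sharp}=y_{\sharp}\circ w_{\sharp}$ coming from the Yoneda-style description of $z_{\sharp}$ recalled at the beginning of the paper. The finiteness of $A$ is used only to guarantee that a broken cube of maximal degree exists.
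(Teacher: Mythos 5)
Your proof is correct and follows essentially the same route as the paper: both hinge on the monotonicity observation that if a broken cube lies in $y_{\sharp}(\llbracket 0,1\rrbracket^{\otimes \deg(y)})$ for some $y \in c(A)$ of larger degree, then $y$ is broken as well. The only (immaterial) difference is the choice of extremal element --- the paper takes a top cube of $c(A)$ containing the given broken element and transfers brokenness up to it, while you take a broken cube of maximal degree and deduce from the same monotonicity that it must be a top cube.
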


\begin{proof}
Let $x \in P$ be broken in $A$. Since $A$ is finite, so is $$c(A) = \bigcup \limits _{a \in A}c(a)_{\sharp}(\llbracket 0,1\rrbracket^{\otimes \deg(c(a))}).$$ It follows that there exists a top cube $z$ of $c(A)$ such that $x \in z_{\sharp}(\llbracket 0,1 \rrbracket ^{\deg (z)})$. Thus,  $x_{\sharp}(\llbracket 0,1\rrbracket ^{\deg(x)}) \subseteq z_{\sharp}(\llbracket 0,1 \rrbracket ^{\deg (z)})$, and hence $f(x_{\sharp}(\llbracket 0,1\rrbracket ^{\deg(x)})) \subseteq f(z_{\sharp}(\llbracket 0,1 \rrbracket ^{\deg (z)}))$. Since $x$ is broken in $A$, we have $f(x_{\sharp}(\llbracket 0,1\rrbracket ^{\deg(x)})) \not \subseteq A$ and hence also $$f(z_{\sharp}(\llbracket 0,1 \rrbracket ^{\deg (z)})) \not \subseteq A.$$ Thus, $z$ is broken in $A$.
\end{proof}

\begin{prop} \label{degnot0}
Let $z$ be a top cube of $c(A)$ that is broken in $A$. Then $\deg(z) > 0$.
\end{prop}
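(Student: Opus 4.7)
The plan is to argue by contradiction. Suppose $\deg(z) = 0$. Since $z$ is a top cube of $c(A)$, Lemma \ref{maxa} furnishes an element $a \in A$ such that $c(a) = z$. The degree of $c(a)$ is then $0$, so by Remark (iii) following the definition of carriers we have $\deg(a) = 0$ and moreover
\[
[a,()] = f([c(a),()]) = [f_0(c(a)),()] = [f_0(z),()],
\]
which forces $a = f_0(z)$. Hence $f_0(z) \in A$.

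On the other hand, $z_{\sharp}(\llbracket 0,1\rrbracket^{\otimes 0}) = \{z\}$ (with the convention $\llbracket 0,1\rrbracket^{\otimes 0} = \{0\}$), and by Remark (iii) following the definition of images we have $f(\{z\}) = \{f_0(z)\}$. Combining these observations yields
\[
f\bigl(z_{\sharp}(\llbracket 0,1\rrbracket^{\otimes 0})\bigr) = \{f_0(z)\} \subseteq A,
\]
which contradicts the assumption that $z$ is broken in $A$. Therefore $\deg(z) > 0$.

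I expect no real obstacle here: the assertion is a short consistency check that reconciles the three ingredients ``top cube,'' ``carrier,'' and ``broken,'' and the key lever is simply Lemma \ref{maxa} together with the fact, recorded in the remarks on carriers, that $\deg(c(a)) = 0$ implies $\deg(a) = 0$ and $a = f_0(c(a))$.
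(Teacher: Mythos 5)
Your proof is correct and follows essentially the same route as the paper's: invoke Lemma \ref{maxa} to produce $a \in A$ with $c(a) = z$, use the carrier remark to get $\deg(a) = 0$ and $a = f_0(z)$, and then observe that $f(z_{\sharp}(\llbracket 0,1\rrbracket^{\otimes 0})) = \{f_0(z)\} \subseteq A$ contradicts brokenness. No issues.
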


\begin{proof}
Suppose that $\deg(z) = 0$. By lemma \ref{maxa}, there exists an element $a \in A$ such that $z = c(a)$. Since $\deg(c(a)) = 0$, also $\deg(a) = 0$. We have $[f_0(c(a)),()] = f([c(a),()]) = [a,()]$ and hence $f_0(z) = f_0(c(a)) = a$. Thus, $f(z_{\sharp}(\llbracket 0,1\rrbracket^{\deg(z)})) =  \{f_0(z)\} \subseteq A$. This contradicts the assumption that $z$ is broken.
\end{proof}

\begin{prop} \label{nobroken}
Suppose that no element of $c(A)$ is broken in $A$. Then  $A = f(c(A))$.
\end{prop}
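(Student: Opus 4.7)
The plan is to establish $A = f(c(A))$ by proving the two inclusions separately. The inclusion $A \subseteq f(c(A))$ is already in hand: it is exactly Proposition \ref{imcarr}(ii), applied to the precubical subset $A \subseteq Q$. So the real work lies in the reverse inclusion $f(c(A)) \subseteq A$, which is where the hypothesis that no element of $c(A)$ is broken must enter.

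For this, I would take an arbitrary element $b \in f(c(A))$ and aim to conclude $b \in A$ through the carrier $c(b)$. The key observation is Proposition \ref{imcarr}(i): since $b \in f(c(A))$, we immediately get $c(b) \in c(A)$. This allows the ``no broken element'' hypothesis to be invoked at the cube $c(b)$: by the very definition of broken, we conclude $f(c(b)_{\sharp}(\llbracket 0,1\rrbracket^{\otimes \deg(c(b))})) \subseteq A$. What remains is to verify that $b$ itself actually lies in this image. But this is the easy direction of Proposition \ref{imcarr}(i) applied with $X = c(b)_{\sharp}(\llbracket 0,1\rrbracket^{\otimes \deg(c(b))})$: trivially $c(b) \in X$ (being the image of $\iota_{\deg(c(b))}$), and therefore $b \in f(X) \subseteq A$.

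I do not expect any serious obstacle here; the proposition is essentially a bookkeeping consequence of the carrier/image duality encoded in Proposition \ref{imcarr}. The only mild point of care is the degenerate case $\deg(c(b)) = 0$, in which $c(b)_{\sharp}(\llbracket 0,1\rrbracket^{\otimes 0}) = \{c(b)\}$ and the broken condition reduces to $f_0(c(b)) \in A$; the degree remark in Section~\ref{homeoabs} already ensures that $b = f_0(c(b))$ in that situation, so the unified argument via Proposition \ref{imcarr}(i) still goes through without any separate case analysis.
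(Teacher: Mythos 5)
Your proof is correct and follows essentially the same route as the paper: the inclusion $A \subseteq f(c(A))$ is Proposition \ref{imcarr}(ii), and the reverse inclusion comes from the non-brokenness hypothesis applied to the cubes of $c(A)$. The paper phrases the second step as a union computation, $f(c(A)) = \bigcup_{x \in c(A)} f(x_{\sharp}(\llbracket 0,1\rrbracket^{\otimes \deg(x)})) \subseteq A$, whereas you locate the relevant cube for each $b \in f(c(A))$ as $c(b)$ via Proposition \ref{imcarr}(i); this is only a difference in bookkeeping.
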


\begin{proof}
By \ref{imcarr}, $A \subseteq f(c(A))$. Since no element of $c(A)$ is broken in $A$, $f(c(A)) = f(\bigcup \limits_{x \in c(A)} x_{\sharp}(\llbracket 0,1\rrbracket^{\otimes \deg(x)})) = \bigcup \limits_{x \in c(A)} f(x_{\sharp}(\llbracket 0,1\rrbracket^{\otimes \deg(x)})) \subseteq A$. 
\end{proof}

\subsection{Past-complete cubes} 
Let $A$ be a precubical subset of $Q$. Consider an $n\mbox{-}$cube $x\in c(A)$.  We say that $x$ is \emph{past-complete in $A$} if either $n = 0$ or for all vertices $(l_1, \dots, l_n) \in R_x$, $$x_{\flat} (l_1, \dots, l_n) \in A \Rightarrow x_{\flat}(\llbracket 0,l_1\rrbracket \otimes \cdots \otimes \llbracket 0, l_n\rrbracket) \subseteq A.$$

\begin{prop} \label{allpastcomplete}
Let $z$ be a top cube of $c(A)$. If all elements of $c(A)$ are past-complete in $A$, then all elements of $c(A^-_z)$ are past-complete in $A^-_z$.
\end{prop}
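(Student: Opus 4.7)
The plan is to fix an arbitrary element $x\in c(A^-_z)$ and show it is past-complete in $A^-_z$. If $\deg(x)=0$ the conclusion holds by definition, so I assume $n=\deg(x)>0$ and write $R_x=\llbracket 0,k_1\rrbracket\otimes\cdots\otimes\llbracket 0,k_n\rrbracket$. Given a vertex $(l_1,\dots,l_n)\in R_x$ with $x_{\flat}(l_1,\dots,l_n)\in A^-_z$, the inclusion $c(A^-_z)\subseteq c(A)$ together with the standing hypothesis gives that $x$ is past-complete in $A$, so $x_{\flat}(\llbracket 0,l_1\rrbracket\otimes\cdots\otimes\llbracket 0,l_n\rrbracket)\subseteq A$. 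What remains is to upgrade this inclusion from $A$ to $A^-_z$.

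To carry out that upgrade I would argue by contradiction. Suppose some $y\in\llbracket 0,l_1\rrbracket\otimes\cdots\otimes\llbracket 0,l_n\rrbracket$ satisfies $x_{\flat}(y)\notin A^-_z$; since $x_{\flat}(y)\in A$, this forces $c(x_{\flat}(y))=z$. Applying Proposition \ref{cnat} to the defining identity $f\circ|x_{\sharp}|=|x_{\flat}|\circ\phi_x$ (taking $g=\phi_x$, $\xi=x_{\sharp}$, $\chi=x_{\flat}$) yields $c(x_{\flat}(y))=x_{\sharp}(c_{\phi_x}(y))$, so $z=x_{\sharp}(c_{\phi_x}(y))$.

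The crucial step, which I expect to be the main obstacle, is to extract from this identity that in fact $z=x$. Since morphisms of precubical sets preserve degree, $\deg(z)=\deg(c_{\phi_x}(y))\leq n$. On the other hand $z\in x_{\sharp}(\llbracket 0,1\rrbracket^{\otimes n})$ with $x\in c(A)$, and $z$ is a top cube of $c(A)$; by the very definition of top cube this forces $\deg(z)\geq n$. Therefore $\deg(z)=n$ and $c_{\phi_x}(y)=\iota_n$, giving $z=x_{\sharp}(\iota_n)=x$. But $x\in c(A^-_z)$ whereas Proposition \ref{canotin} asserts $z\notin c(A^-_z)$, contradicting $z=x$ and completing the argument. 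Note that this contradiction uses only the degree comparison and the top-cube property of $z$; neither Lemma \ref{bcb} nor any subtle position of $y$ inside $\llbracket 0,l_1\rrbracket\otimes\cdots\otimes\llbracket 0,l_n\rrbracket$ is needed.
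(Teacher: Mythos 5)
Your proof is correct and follows essentially the same route as the paper: reduce to past-completeness in $A$, deduce $c(x_{\flat}(y))=z$ for the offending element $y$, place $z$ inside $x_{\sharp}(\llbracket 0,1\rrbracket^{\otimes n})$ via the naturality of carriers, and contradict Proposition \ref{canotin}. The only difference is cosmetic: the paper concludes directly from $z\in x_{\sharp}(\llbracket 0,1\rrbracket^{\otimes n})\subseteq c(A^-_z)$, whereas you take the small extra detour of pinning down $z=x$ by the degree comparison, which is valid but not needed.
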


\begin{proof}
Suppose that there exists an $n$-cube  $x \in c(A^-_z)$ that is not past-complete in $A^-_z$. Consider a vertex $(l_1, \dots , l_n) \in R_x$ such that $x_{\flat}(l_1, \dots, l_n) \in A^-_z$ but $x_{\flat}(\llbracket 0,l_1\rrbracket \otimes \cdots \otimes \llbracket 0, l_n\rrbracket) \not \subseteq A^-_z$. Consider an element $y \in \llbracket 0,l_1\rrbracket \otimes \cdots \otimes \llbracket 0, l_n\rrbracket$ such that $x_{\flat}(y) \notin A^-_z$. Since $x$ is past-complete in $A$, $x_{\flat}(y) \in A$. Hence $c(x_{\flat}(y)) = z$. Since $x_{\flat}(y) \in f(x_{\sharp}(\llbracket 0,1\rrbracket ^{\otimes n}))$, $c(x_{\flat}(y)) \in x_{\sharp}(\llbracket 0,1\rrbracket ^{\otimes n})$. It follows that $z \in x_{\sharp}(\llbracket 0,1\rrbracket^ {\otimes n}) \subseteq c(A^-_z)$. By proposition \ref{canotin}, this is impossible.
\end{proof}

\subsection{Order-convex sets} \label{orderconvex}
A subset $B$ of a partially ordered set $(S,\leq)$ is called \emph{order-convex} if for all $x,y \in B$, $\{z \in S : y\leq z \leq x\} \subseteq B$. A subset $B$ of $S$ with a minimal element $m$ is order-convex if and only if for each element  $x \in B$, $\{z \in S : m \leq z \leq x\} \subseteq B$.

Let $B$ be an order-convex subset of $[0,k_1] \times \cdots \times [0,k_n]\setminus \{(k_1, \dots, k_n)\}$ such that $(0,\dots,0) \in B$. Then $B$ is contractible. A contraction  $G\colon B \times [0,1] \to B$ is given by  $G(x,t) = (1-t)x$.

Write $\partial ([0,k_1] \times \cdots \times [0,k_n]) = ([0,k_1] \times \cdots \times [0,k_n]) \setminus (]0,k_1[ \times \cdots \times ]0,k_n[)$. Let $B$ be an order-convex subset of $\partial ([0,k_1] \times \cdots \times [0,k_n]) \setminus \{(k_1, \dots, k_n)\}$ such that $(0, \dots, 0) \in B$. Then $B$ is contractible. A contraction $H\colon B \times [0,1] \to B$ is given by 
$$H(x,t) = \left\{\begin{array}{ll} x - \tfrac{2t\cdot\min \limits_{1\leq i \leq n}\frac{x_i}{k_i}}{1-\min \limits_{1\leq i \leq n}\tfrac{x_i}{k_i}} ((k_1, \dots, k_n)-x), & t \leq \frac{1}{2},\\& \\ (2-2t)(x - \tfrac{\min \limits_{1\leq i \leq n}\frac{x_i}{k_i}}{1-\min \limits_{1\leq i \leq n}\tfrac{x_i}{k_i}} ((k_1, \dots, k_n)-x)), & t\geq \frac{1}{2}.\end{array}\right.$$

Let $B$ be an order-convex subset of $[0,k_1] \times \cdots \times [0,k_n] \setminus\{(k_1, \dots, k_n)\}$ such that $(0,\dots 0) \in B$. Then $B\cap \partial ([0,k_1] \times \cdots \times [0,k_n])$ is an order-convex subset of $\partial ([0,k_1] \times \cdots \times [0,k_n]) \setminus \{(k_1, \dots, k_n)\}$ and $(0, \dots ,0) \in B\cap \partial ([0,k_1] \times \cdots \times [0,k_n])$. Hence $B\cap \partial ([0,k_1] \times \cdots \times [0,k_n])$ is contractible.

\subsection{A homotopy equivalence} Let $A$ be a precubical subset of $Q$ and $z$ be a top cube of $c(A)$ that is broken but past-complete in $A$. We shall show that the inclusion $|A^-_z| \hookrightarrow |A|$ is a homotopy equivalence. Set $n = \deg(z)$. By \ref{degnot0}, $n >0$. Let  $R_z = \llbracket 0, k_1\rrbracket \otimes \cdots \otimes \llbracket 0, k_n\rrbracket$. 

\begin{lem} \label{0in}
$(0, \dots, 0) \in z_{\flat}^{-1}(A)$.
\end{lem}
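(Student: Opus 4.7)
The plan is to locate some vertex $(l_1, \dots, l_n) \in R_z$ satisfying $z_\flat(l_1, \dots, l_n) \in A$; past-completeness of $z$ in $A$ then forces $z_\flat(\llbracket 0,l_1\rrbracket \otimes \cdots \otimes \llbracket 0,l_n\rrbracket) \subseteq A$, and since $(0,\dots,0)$ lies in that subcube, the statement follows.

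To produce such a vertex, I would start from Lemma \ref{maxa}, which gives an element $a \in A$ with $c(a) = z$. By the defining property of the carrier, there exists $u \in \,]0,1[^n$ such that $f([z,u]) = [a, (\tfrac12,\dots,\tfrac12)]$. Rewriting with the commutation $f \circ |z_\sharp| = |z_\flat| \circ \phi_z$ and expressing $\phi_z(u) = [y,v]$ with $y \in R_z$ and $v \in \,]0,1[^{\deg(y)}$ yields $[z_\flat(y),v] = [a,(\tfrac12,\dots,\tfrac12)]$; the uniqueness of the canonical representation of a point in a CW-complex gives $a = z_\flat(y)$, $v = (\tfrac12,\dots,\tfrac12)$ and, crucially, $\deg(y) = \deg(a)$.

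Now, since $A$ is a precubical subset, all iterated faces of $a$ lie in $A$; in particular $(d_1^0)^{\deg(a)} a = a_\sharp(0,\dots,0) \in A$. Using that $z_\flat$ is a morphism of precubical sets and that $\deg(y) = \deg(a)$, one computes
\[
a_\sharp(0,\dots,0) \;=\; (d_1^0)^{\deg(a)} z_\flat(y) \;=\; z_\flat\bigl((d_1^0)^{\deg(y)} y\bigr) \;=\; z_\flat\bigl(y_\sharp(0,\dots,0)\bigr).
\]
Setting $(l_1, \dots, l_n) := y_\sharp(0,\dots,0)$, which is a vertex of $R_z$, we obtain $z_\flat(l_1, \dots, l_n) \in A$. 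Applying past-completeness of $z$ in $A$ at this vertex and noting $(0,\dots,0) \in \llbracket 0, l_1\rrbracket \otimes \cdots \otimes \llbracket 0, l_n\rrbracket$ finishes the proof.

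The argument is mostly bookkeeping; the only delicate point is the identification $\deg(y) = \deg(a)$, which is needed to guarantee that the iterated $d_1^0$ applied to $a$ actually reaches a vertex on both sides of the equation $a = z_\flat(y)$. This falls out of the uniqueness of the canonical form in the geometric realisation, so no essential obstacle is expected. Notice that this argument does not use that $z$ is broken in $A$; past-completeness alone, together with the existence of $a$ provided by maximality, suffices.
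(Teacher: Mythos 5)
Your proof is correct and follows essentially the same route as the paper: obtain $a\in A$ with $c(a)=z$ from Lemma \ref{maxa}, deduce that $z_{\flat}^{-1}(A)$ contains a vertex, and invoke past-completeness. The only cosmetic difference is that you re-derive $a\in z_{\flat}(R_z)$ by hand from the canonical form in the geometric realisation (the paper just cites $a\in f(c(a)_{\sharp}(\llbracket 0,1\rrbracket^{\otimes n}))=z_{\flat}(R_z)$) and you name the vertex explicitly as $y_{\sharp}(0,\dots,0)$ rather than appealing to the fact that a nonempty precubical subset contains a vertex.
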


\begin{proof}
By lemma \ref{maxa}, there exists an element $a \in A$ such that $z = c(a)$. We have $a \in  f(c(a)_{\sharp}(\llbracket 0,1\rrbracket ^{\otimes n})) = z_{\flat}(R_z)$. It follows that there exists an element $y \in R_z$ such that $z_{\flat} (y) = a$. Since $a\in A$, $y \in z_{\flat}^{-1}(A)$ and therefore $z_{\flat}^{-1}(A) \not= \emptyset$. Let $(l_1, \dots , l_n)$ be a vertex in $z_{\flat}^{-1}(A)$. Since $z$ is past-complete in $A$, $z_{\flat} (\llbracket 0, l_1\rrbracket \otimes \cdots \otimes \llbracket 0, l_n\rrbracket) \subseteq A$. This implies that $(0, \dots, 0) \in z_{\flat}^{-1}(A)$.
\end{proof}

\begin{lem} \label{knotin}
$(k_1, \dots, k_n) \notin z_{\flat}^{-1}(A)$.
\end{lem}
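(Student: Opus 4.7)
The plan is to argue by contradiction, using the hypothesis that $z$ is broken in $A$ to rule out $(k_1,\dots,k_n)$ lying in $z_{\flat}^{-1}(A)$.

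Suppose for contradiction that $(k_1,\dots,k_n) \in z_{\flat}^{-1}(A)$, so that $z_{\flat}(k_1,\dots,k_n) \in A$. Since $z$ is past-complete in $A$, applying the definition to the vertex $(k_1,\dots,k_n) \in R_z$ yields
$$z_{\flat}\bigl(\llbracket 0,k_1\rrbracket \otimes \cdots \otimes \llbracket 0,k_n\rrbracket\bigr) \subseteq A,$$
i.e.\ $z_{\flat}(R_z) \subseteq A$.

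Now I would invoke remark (iii) after the definition of images, which states that for $z \in P_n$ with $n>0$ one has $f(z_{\sharp}(\llbracket 0,1\rrbracket^{\otimes n})) = z_{\flat}(R_z)$. Combining this with the inclusion above gives
$$f\bigl(z_{\sharp}(\llbracket 0,1\rrbracket^{\otimes n})\bigr) = z_{\flat}(R_z) \subseteq A.$$
But this directly contradicts the assumption that $z$ is broken in $A$, which requires $f(z_{\sharp}(\llbracket 0,1\rrbracket^{\otimes n})) \not\subseteq A$. Hence $(k_1,\dots,k_n) \notin z_{\flat}^{-1}(A)$.

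There is no real obstacle here: once one notices that the image $f(z_{\sharp}(\llbracket 0,1\rrbracket^{\otimes n}))$ coincides with $z_{\flat}(R_z)$, past-completeness at the corner vertex $(k_1,\dots,k_n)$ immediately fills out all of $R_z$, which is exactly the negation of being broken. The lemma is really just the observation that past-completeness and brokenness are incompatible at the maximal corner.
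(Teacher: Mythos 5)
Your proof is correct and is essentially identical to the paper's: both argue by contradiction, use past-completeness at the corner vertex $(k_1,\dots,k_n)$ to conclude $z_{\flat}(R_z)=f(z_{\sharp}(\llbracket 0,1\rrbracket^{\otimes n}))\subseteq A$, and then contradict the assumption that $z$ is broken in $A$.
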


\begin{proof}
Suppose that $(k_1, \dots, k_n) \in z_{\flat}^{-1}(A)$. Since $z$ is past-complete in $A$, $$f(z_{\sharp}(\llbracket 0,1\rrbracket^{\otimes n}))= z_{\flat} (\llbracket 0, k_1\rrbracket \otimes \cdots \otimes \llbracket 0, k_n\rrbracket) \subseteq A.$$ This contradicts the fact that $z$ is broken in $A$. 
\end{proof}

\begin{lem} \label{chiorder}
$|z_{\flat}^{-1}(A)|$ is an order-convex subset of $$[0,k_1] \times \cdots \times [0,k_n] \setminus\{(k_1, \dots, k_n)\}$$ and $(0, \dots , 0) \in |z_{\flat}^{-1}(A)|$.
\end{lem}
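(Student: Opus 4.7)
The plan is to verify three things: $(0,\dots,0) \in |z_{\flat}^{-1}(A)|$, $(k_1,\dots,k_n) \notin |z_{\flat}^{-1}(A)|$, and order-convexity of $|z_{\flat}^{-1}(A)|$ inside $[0,k_1] \times \cdots \times [0,k_n]$. The first is immediate from Lemma~\ref{0in}: the vertex $(0,\dots,0) \in R_z$ lies in $z_{\flat}^{-1}(A)$, so its image in the geometric realization lies in $|z_{\flat}^{-1}(A)|$. For the second, I would use the unique normal form $[x,u]$ with $u \in\, ]0,1[^{\deg x}$: the point $(k_1,\dots,k_n) \in |R_z|$ is represented by the vertex $(k_1,\dots,k_n) \in R_z$ paired with the empty tuple, so it lies in $|z_{\flat}^{-1}(A)|$ iff that vertex lies in $z_{\flat}^{-1}(A)$, which Lemma~\ref{knotin} rules out.

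For order-convexity I would invoke the remark from section~\ref{orderconvex}: since the minimum $(0,\dots,0)$ of $[0,k_1]\times\cdots\times[0,k_n]$ lies in $|z_{\flat}^{-1}(A)|$, it suffices to show that $|z_{\flat}^{-1}(A)|$ is downward-closed below each of its points. So I would fix $x \in |z_{\flat}^{-1}(A)|$ and write $x = [a,v]$ uniquely with $a \in z_{\flat}^{-1}(A)$ and $v \in\, ]0,1[^{\deg a}$. The key move is to pass from the (possibly positive-degree) cube $a$ to its ``upper vertex'' $a^+ := a_{\sharp}(1,\dots,1) \in R_z$. Using the identity $(1,\dots,1) = d_1^1 \cdots d_1^1 \iota_{\deg a}$ in $\llbracket 0,1 \rrbracket^{\otimes \deg a}$ and the fact that $a_{\sharp}$ is a morphism of precubical sets, $a^+$ is a $(\deg a)$-fold iterated $d_1^1$-boundary of $a$ (with the understanding $a^+ = a$ when $\deg a = 0$). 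Since $A$ is a precubical subset of $Q$ and $z_{\flat}$ commutes with boundaries, this gives $z_{\flat}(a^+) \in A$, i.e.\ $a^+ \in z_{\flat}^{-1}(A)$.

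Writing $a^+ = (a^+_1,\dots,a^+_n)$ and applying past-completeness of $z$ in $A$ to this vertex then yields $\llbracket 0, a^+_1 \rrbracket \otimes \cdots \otimes \llbracket 0, a^+_n \rrbracket \subseteq z_{\flat}^{-1}(A)$, whose realization is the box $[0, a^+_1] \times \cdots \times [0, a^+_n] \subseteq |z_{\flat}^{-1}(A)|$. To finish, the coordinate description of $x = [a,v]$ in $[0,k_1]\times\cdots\times[0,k_n]$ places each coordinate $x_i$ in $[a^-_i, a^+_i]$, the bounds of the cube represented by $a$; hence $x \leq a^+$ coordinatewise, and any $s$ with $0 \leq s \leq x$ also satisfies $s \leq a^+$, sitting inside the box and hence in $|z_{\flat}^{-1}(A)|$. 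I do not anticipate a serious obstacle: the only bookkeeping is the identification of $a^+$ with an iterated $d^1$-boundary of $a$, which follows directly from functoriality of $a_{\sharp}$ applied to $(1,\dots,1) \in \llbracket 0,1 \rrbracket^{\otimes \deg a}$, and the coordinate inequality $x \leq a^+$, which is a routine translation between the precubical data and its realization as a box in $\mathbb{R}^n$.
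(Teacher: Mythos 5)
Your proof is correct and takes essentially the same route as the paper's: both reduce order-convexity to downward-closure below each point (using that $(0,\dots,0)$ is the minimum), pass from a point $[a,v]$ to the upper vertex of the carrying cube $a$, and apply past-completeness of $z$ to that vertex. The only cosmetic difference is that you identify this upper vertex intrinsically as the iterated $d_1^1$-face $a_{\sharp}(1,\dots,1)$, whereas the paper computes it in explicit coordinates as $(l_1,\dots,l_n)=[y,(1,\dots,1)]$; the two descriptions agree.
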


\begin{proof}
By \ref{0in} and \ref{knotin}, $|z_{\flat}^{-1}(A)|$ is a subset of $[0,k_1] \times \cdots \times  [0,k_n] \setminus\{(k_1, \dots, k_n)\}$ that contains $(0, \dots, 0)$. It remains to show that $|z_{\flat}^{-1}(A)|$ is order-convex. Consider an element $[y, u] \in |z_{\flat}^{-1}(A)|$ where $y  \in z_{\flat}^{-1}(A)$ and $u \in ]0,1[^ {\deg (y)}$. Write $y = (y_1, \dots, y_n)$ and suppose $\deg(y) = p$. Then there exist indices $1 \leq i_1 < \dots < i_p \leq n$ such that $\deg(y_{i_q}) = 1$ for $q \in \{1, \dots ,p\}$ and $y_i \in \{0, \dots, k_i\}$ for $i \notin \{i_1, \dots ,i_p\}$. For each $q \in \{1, \dots ,p\}$ there exists an element $j_q \in \{0,\dots, k_q-1\}$ such that $y_{i_q} = [j_q,j_q+1]$.  
We have $$[y,u] = [y,(u_1,\dots,u_p)] = (t_1, \dots, t_n) \in [0,k_1] \times \cdots \times [0,k_n]$$ where $t_i = y_i$ for $i \notin \{i_1, \dots ,i_p\}$ and $t_{i_q} = j_q+u_q$ for $q \in \{1, \dots ,p\}$. Set $l_i = y_i$ for $i \notin \{i_1, \dots ,i_p\}$ and $l_{i_q} = j_q+1$ for $q \in \{1, \dots ,p\}$. Then $(l_1, \dots, l_n) = [y,(1,\dots,1)] \in y_{\sharp}(\llbracket 0,1\rrbracket^{\otimes p}) \subseteq z_{\flat}^{-1}(A)$ and $[y,u] \leq (l_1, \dots, l_n)$. Since $z$ is past-complete in $A$, $\llbracket 0, l_1\rrbracket \otimes \cdots \otimes \llbracket 0, l_n\rrbracket \subseteq z_{\flat}^{-1}(A)$ and hence $[0, l_1] \times \cdots \times  [0, l_n] \subseteq |z_{\flat}^{-1}(A)|$. Consider an element $x \in [0,k_1] \times \cdots \times [0,k_n]\setminus \{(k_1, \dots, k_n)\}$ such that $x \leq [y,u]$. Then $x \leq (l_1, \dots, l_n)$ and therefore $x \in [0, l_1] \times \cdots \times  [0, l_n] \subseteq |z_{\flat}^{-1}(A)|$. 
\end{proof}

\begin{lem} \label{boundary}
$|z_{\flat}^{-1}(A^-_z)| = |z_{\flat}^{-1}(A)|\cap \partial ([0,k_1] \times \cdots \times [0,k_n])$.
\end{lem}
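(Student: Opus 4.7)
I would prove the two inclusions separately, with both directions resting on lemmas \ref{bbu} and \ref{bcb} together with the naturality identity $c(z_{\flat}(y)) = z_{\sharp}(c(y))$ from proposition \ref{cnat} (applied to the diagram $f \circ |z_{\sharp}| = |z_{\flat}| \circ \phi_z$).

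For the inclusion $\subseteq$, I would take $[y,u] \in |z_{\flat}^{-1}(A^-_z)|$ with $y \in z_{\flat}^{-1}(A^-_z)$ and $u \in ]0,1[^{\deg(y)}$. Since $z_{\flat}^{-1}(A^-_z) \subseteq z_{\flat}^{-1}(A)$, membership in $|z_{\flat}^{-1}(A)|$ is immediate. Suppose for contradiction that $[y,u] \in \,]0,k_1[\times \cdots \times\, ]0,k_n[$. By lemma \ref{bbu}(i), this forces $y \in \dot R_z$, and then lemma \ref{bcb} (applied with $\phi = \phi_z$) gives $c(y) = \iota_n$. By proposition \ref{cnat}, $c(z_{\flat}(y)) = z_{\sharp}(c(y)) = z_{\sharp}(\iota_n) = z$, contradicting the fact that $z_{\flat}(y) \in A^-_z$.

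For the inclusion $\supseteq$, I would take $[y,u] \in |z_{\flat}^{-1}(A)| \cap \partial([0,k_1] \times \cdots \times [0,k_n])$ and show that $z_{\flat}(y) \in A^-_z$. If not, then $z_{\flat}(y) \in A$ with $c(z_{\flat}(y)) = z$, so $z_{\sharp}(c(y)) = z$. Since $z_{\sharp}$ preserves degree, this forces $\deg(c(y)) = n$ and hence $c(y) = \iota_n$. Lemma \ref{bcb} then places $y$ in $\dot R_z$, and lemma \ref{bbu}(i) therefore puts $[y,u]$ in the open product $]0,k_1[\times \cdots \times\, ]0,k_n[$, contradicting its membership in $\partial([0,k_1] \times \cdots \times [0,k_n])$.

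There is no real obstacle here; the statement is a translation between the combinatorial description of $A^-_z$ (throwing away elements whose carrier is $z$) and the geometric description of $\partial R_z$ (removing the interior of the $n$-cube). The only point that demands a little care is invoking lemma \ref{bcb} with the correct dihomeomorphism $\phi_z$, and noting that $c$ in that lemma refers to carriers with respect to $\phi_z$, which by proposition \ref{cnat} matches carriers along $f$ via the identity $c_f \circ z_{\flat} = z_{\sharp} \circ c_{\phi_z}$.
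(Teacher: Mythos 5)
Your proposal is correct and follows essentially the same route as the paper: the same chain of facts (the definition of $A^-_z$, the naturality identity $c(z_{\flat}(y)) = z_{\sharp}(c(y))$ from Proposition \ref{cnat}, the degree argument forcing $c(y)=\iota_n$, and Lemma \ref{bcb}) drives both arguments. The only cosmetic difference is that the paper first observes $|\partial R_z| = \partial([0,k_1]\times\cdots\times[0,k_n])$ and then proves the purely combinatorial identity $z_{\flat}^{-1}(A^-_z) = z_{\flat}^{-1}(A)\cap \partial R_z$ by a single chain of equivalences, whereas you argue pointwise in the geometric realisation and invoke Lemma \ref{bbu}(i) explicitly to pass between $y\in\dot R_z$ and $[y,u]$ lying in the open product.
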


\begin{proof}
We have $|\partial R_z| = \partial ([0,k_1] \times \cdots \times [0,k_n])$. It is therefore enough to show that $z_{\flat}^{-1}(A^-_z) = z_{\flat}^{-1}(A)\cap  \partial R_z$. Consider an element $y \in z_{\flat}^{-1}(A)$. We have $y\in z_{\flat}^{-1}(A^-_z) \Leftrightarrow z_{\flat}(y) \in A^-_z \Leftrightarrow c(z_{\flat} (y)) \not= z  \Leftrightarrow z_{\sharp}(c(y)) \not= z \Leftrightarrow c(y) \not= \iota_n \Leftrightarrow y \in \partial R_z$. 
\end{proof}

\begin{prop} \label{hoequiv}
The inclusion $|A^-_z| \hookrightarrow |A|$ is a homotopy equivalence. 
\end{prop}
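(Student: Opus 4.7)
The plan is to realise the pushout square of Proposition \ref{pushout} geometrically and then invoke the classical gluing lemma for trivial cofibrations in $\mathbf{Top}$. Because $|\cdot|$ is left adjoint to the singular functor $S$ introduced in \ref{geomreal}, it preserves all colimits; applying it to the pushout of Proposition \ref{pushout} therefore yields a pushout square of topological spaces in which the two vertical arrows are the inclusions $|z_{\flat}^{-1}(A^-_z)|\hookrightarrow|z_{\flat}^{-1}(A)|$ and $|A^-_z|\hookrightarrow|A|$. Both are inclusions of CW-subcomplexes and are therefore closed cofibrations.

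The principal step is to show that the left vertical inclusion is a homotopy equivalence, which I would do by proving that both its source and its target are contractible. By Lemma \ref{chiorder}, the target $|z_{\flat}^{-1}(A)|$ is an order-convex subset of $[0,k_1]\times\cdots\times[0,k_n]\setminus\{(k_1,\dots,k_n)\}$ containing the origin, hence contractible by the first explicit contraction exhibited in \ref{orderconvex}. By Lemma \ref{boundary}, the source $|z_{\flat}^{-1}(A^-_z)|$ equals $|z_{\flat}^{-1}(A)|\cap\partial([0,k_1]\times\cdots\times[0,k_n])$, which is an order-convex subset of $\partial([0,k_1]\times\cdots\times[0,k_n])\setminus\{(k_1,\dots,k_n)\}$ containing $(0,\dots,0)$ and therefore contractible via the second contraction in \ref{orderconvex}. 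Hence the left vertical arrow is a map between contractible spaces and so is a homotopy equivalence.

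To conclude, I would invoke the standard fact that in $\mathbf{Top}$ the pushout of a closed cofibration that is also a homotopy equivalence along an arbitrary continuous map is again a closed cofibration and a homotopy equivalence. Applied to the realised pushout, with the left vertical inclusion playing the role of the trivial cofibration being pushed along $|z_{\flat}|$, this shows that the right vertical inclusion $|A^-_z|\hookrightarrow|A|$ is a homotopy equivalence, as required.

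The main technical content of the argument is the two contractibility assertions in the second paragraph; the rest is a formal application of the CW-pushout gluing principle. The hypotheses on $z$ enter precisely at this technical core: past-completeness of $z$ in $A$ is what enables the order-convexity argument in Lemma \ref{chiorder}, and the fact that $z$ is broken in $A$ is what ensures, via Lemma \ref{knotin}, that the corner $(k_1,\dots,k_n)$ is excluded from $|z_{\flat}^{-1}(A)|$ so that the contractions of \ref{orderconvex} are available.
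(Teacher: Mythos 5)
Your proposal is correct and follows essentially the same route as the paper: realise the pushout of Proposition \ref{pushout}, use \ref{chiorder}, \ref{boundary} and the contractions of \ref{orderconvex} to see that $|z_{\flat}^{-1}(A)|$ and $|z_{\flat}^{-1}(A^-_z)|$ are contractible, and conclude by pushing the resulting trivial closed cofibration along $|z_{\flat}|$. Your closing remarks on where past-completeness and brokenness of $z$ enter are an accurate reading of the argument.
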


\begin{proof}
As a left adjoint, the geometric realisation preserves push outs. Hence, by \ref{pushout}, the diagram
$$\xymatrix{|z_{\flat}^{-1}(A^-_z)| \ar^{|z_{\flat}|}[r] \ar@{^{(}->}+<0ex,-2ex>;[d] & |A^-_z| \ar@{^{(}->}+<0ex,-2ex>;[d]\\ |z_{\flat}^{-1}(A)| \ar^{|z_{\flat}|}[r]& |A|
}$$
is a push out of topological spaces. By \ref{orderconvex}, \ref{chiorder} and \ref{boundary}, $|z_{\flat}^{-1}(A)|$ and $|z_{\flat}^{-1}(A^-_z)|$ are contractible. Therefore the inclusion $|z_{\flat}^{-1}(A^-_z)| \hookrightarrow |z_{\flat}^{-1}(A)|$ is a homotopy equivalence. Since it is the inclusion of a sub CW-complex, it is a closed cofibration. The result follows.
\end{proof}

\subsection{Invariance of the homology graph under homeomorphic abstraction} It follows from theorem \ref{main} below that if an $M$-HDA $\A$ is a homeomorphic abstraction of an $M$-HDA $\B$, then $\A$ and $\B$ have isomorphic homology graphs.

\begin{lem} \label{reduction}
Let $A$ be a finite precubical subset of $Q$ such that all elements of $c(A)$ are past complete in $A$. Then there exists a precubical subset $\tilde A$ of $A$ such that the inclusion $|\tilde A| \hookrightarrow |A|$ is a homotopy equivalence and no element of $P$ is broken in $\tilde A$.
\end{lem}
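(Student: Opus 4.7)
The plan is to prove the lemma by induction on the cardinality of $A$, using the earlier structural results to peel off one broken top cube at a time while preserving homotopy type.

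First, I would handle the base case and the trivial case: if no element of $P$ is broken in $A$, then $\tilde A = A$ works immediately. In particular, this covers $A = \emptyset$. So assume some element of $P$ is broken in $A$. Since $A$ is finite, proposition \ref{brokenmax} furnishes a top cube $z$ of $c(A)$ that is broken in $A$. By hypothesis, every element of $c(A)$ is past-complete in $A$, and in particular so is $z$. Therefore the hypotheses of proposition \ref{hoequiv} are satisfied, and the inclusion $|A^-_z| \hookrightarrow |A|$ is a homotopy equivalence.

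Next I would verify that the inductive hypothesis applies to $A^-_z$. The set $A^-_z$ is a precubical subset of $A$ by proposition \ref{canotin}, and it is finite because $A$ is. By lemma \ref{maxa}, there exists at least one element $a \in A$ with $c(a) = z$, and every such $a$ lies in $A \setminus A^-_z$; hence $A^-_z$ has strictly fewer elements than $A$. Moreover, by proposition \ref{allpastcomplete}, all elements of $c(A^-_z)$ are past-complete in $A^-_z$. Thus the inductive hypothesis gives a precubical subset $\tilde A \subseteq A^-_z$ with $|\tilde A| \hookrightarrow |A^-_z|$ a homotopy equivalence and no element of $P$ broken in $\tilde A$.

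Finally I would assemble the pieces: $\tilde A$ is a precubical subset of $A$, and composing the inclusions $|\tilde A| \hookrightarrow |A^-_z| \hookrightarrow |A|$ yields a homotopy equivalence, since both factors are homotopy equivalences by the previous steps. The resulting $\tilde A$ has no element of $P$ broken in it, so it meets all requirements of the lemma.

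The main subtlety, and the only place where one must be careful, is to confirm that the induction is well-founded: one must check that $A^-_z$ is strictly smaller than $A$ (which is why the appeal to lemma \ref{maxa} is essential) and that all the inductive hypotheses (finiteness, past-completeness of every element of the carrier) are genuinely preserved when passing from $A$ to $A^-_z$. The past-completeness is exactly what proposition \ref{allpastcomplete} delivers, so the induction runs smoothly; there is no substantive obstacle once the earlier propositions are in hand.
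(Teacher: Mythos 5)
Your proof is correct, and its core is the same as the paper's: repeatedly locate a broken top cube $z$ of the carrier via Proposition \ref{brokenmax}, pass to $A^-_z$, and use Propositions \ref{canotin}, \ref{allpastcomplete} and \ref{hoequiv} to see that the hypotheses persist and that $|A^-_z| \hookrightarrow |A|$ is a homotopy equivalence. The one genuine difference is the termination argument. The paper argues by contradiction, choosing a subset $B$ of $A$ minimizing the number $q$ of broken elements of $P$, and then needs Proposition \ref{subbroken} (every element broken in $B^-_z$ is broken in $B$) together with $z \notin c(B^-_z)$ to conclude that $B^-_z$ has strictly fewer broken elements, contradicting minimality. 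You instead run a well-founded induction on the cardinality of $A$, using Lemma \ref{maxa} to produce an element $a \in A$ with $c(a) = z$ and hence $A^-_z \subsetneq A$. Your variant is slightly leaner in that it bypasses Proposition \ref{subbroken} altogether; the paper's variant descends on a quantity more directly tied to the conclusion (the number of broken cubes). Both are valid, and your identification of the well-foundedness check as the only delicate point is exactly right.
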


\begin{proof}
Suppose that the result is not true. Let $B$ be a precubical subset of $A$ such that all elements of $c(B)$ are past-complete in $B$, the inclusion $|B| \hookrightarrow |A|$ is a homotopy equivalence and the number $q$ of elements of $P$ that are  broken in $B$ is minimal. By the hypothesis, $q > 0$. By proposition \ref{brokenmax}, there exists a top cube $z \in c(B)$ that is broken in $B$. By proposition \ref{allpastcomplete}, all elements of $c(B^-_z)$ are past-complete in $B^-_z$. By proposition \ref{hoequiv}, the inclusion $|B^-_z| \hookrightarrow |B|$ is a homotopy equivalence. It follows that the inclusion $|B^-_z| \hookrightarrow |A|$ is a homotopy equivalence. Since $A$ is finite, so are $B$, $B^-_z$, $c(B)$ and $c(B^-_z)$. Hence $q$ and the number $r$ of elements of $P$ that are broken in $B^-_z$ are finite. By proposition \ref{subbroken}, any element of $P$ that is broken in $B^-_z$ is also broken in $B$. By proposition \ref{canotin}, $z \notin c(B^-_z)$. Hence $z$ is broken in $B$ but not in $B^-_z$. It follows that $r < q$. This contradicts the minimality of $q$.
\end{proof}

\begin{lem} \label{lemma1}
Consider homology classes $\alpha, \beta \in H(|P|)$. Let $X \subseteq P$ and $B \subseteq Q$ be precubical subsets such that $\alpha \in \im H(|X| \hookrightarrow |P|)$, $f_*(\beta) \in \im H(|B| \hookrightarrow |Q|)$ and for all vertices $a \in f(X)_0$ and $b\in B_0$ there exists a path in $Q$ from $a$ to $b$. Then $\alpha \nearrow \beta$.
\end{lem}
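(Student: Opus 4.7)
The plan is to verify the definition of $\alpha \nearrow \beta$ directly, using the given $X$ as the precubical subset witnessing $\alpha$ and the carrier $Y := c(B) \subseteq P$ as the subset witnessing $\beta$. Two things must be checked: first, $\beta \in \im H(|c(B)| \hookrightarrow |P|)$; and second, every pair of vertices $x \in X_0$, $y \in c(B)_0$ is connected by a path in $P$.

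For the first point, I would invoke \ref{imcarr}(ii) to write $B \subseteq f(c(B))$, hence $|B| \subseteq |f(c(B))|$. Since $f$ restricts to a homeomorphism $|c(B)| \to |f(c(B))|$, the inclusion $|f(c(B))| \hookrightarrow |Q|$ coincides (after identifying $|f(c(B))|$ with $|c(B)|$ via this restriction) with the composition of $|c(B)| \hookrightarrow |P|$ and $f$. Passing to homology yields $\im H(|f(c(B))| \hookrightarrow |Q|) = f_*(\im H(|c(B)| \hookrightarrow |P|))$, so $f_*(\beta)$, which lies in $\im H(|B| \hookrightarrow |Q|) \subseteq \im H(|f(c(B))| \hookrightarrow |Q|)$, lies in $f_*(\im H(|c(B)| \hookrightarrow |P|))$. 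Injectivity of $f_*$ then forces $\beta \in \im H(|c(B)| \hookrightarrow |P|)$.

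For the second point, fix $x \in X_0$ and $y \in c(B)_0$. Using $c(B) = \bigcup_{b \in B} c(b)_\sharp(\llbracket 0, 1\rrbracket^{\otimes \deg c(b)})$, I pick $b \in B$ and a vertex $e$ of $\llbracket 0,1\rrbracket^{\otimes \deg c(b)}$ with $y = c(b)_\sharp(e)$, and set $b' = b_\sharp(0, \dots, 0) \in B_0$. The hypothesis supplies a path $\omega$ in $Q$ from $f_0(x) \in f(X)_0$ to $b'$, and Proposition \ref{rho} yields a path $\rho(\omega)$ in $P$ from $c(f_0(x))_\sharp(0, \dots, 0)$ to $c(b')_\sharp(0, \dots, 0)$. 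I then identify both endpoints explicitly: by the weak morphism property $f([x,()]) = [f_0(x),()]$, injectivity of $f$ combined with the uniqueness of cubical representatives in $|P|$ forces $c(f_0(x)) = x$, so the source of $\rho(\omega)$ is $x$. By \ref{prelem}, $c(b') \in c(b)_\sharp(\llbracket 0, 1\llbracket^{\otimes \deg c(b)})$; since $c(b')$ is a vertex and $\llbracket 0, 1\llbracket^{\otimes \deg c(b)}$ has only the vertex $(0, \dots, 0)$, we conclude $c(b') = c(b)_\sharp(0, \dots, 0)$, so the target of $\rho(\omega)$ is $c(b)_\sharp(0, \dots, 0)$. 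Finally, a standard directed edge-path in $\llbracket 0,1\rrbracket^{\otimes \deg c(b)}$ from $(0, \dots, 0)$ to $e$ (activating coordinates one at a time) maps under $c(b)_\sharp$ to a path from $c(b)_\sharp(0, \dots, 0)$ to $y$; concatenating with $\rho(\omega)$ yields the required path from $x$ to $y$.

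The main technical obstacle is the endpoint bookkeeping for $\rho(\omega)$: matching $c(f_0(x))$ with $x$ and $c(b')$ with the initial vertex $c(b)_\sharp(0, \dots, 0)$ of the cube containing $y$. The first identification uses injectivity of $f$ and the uniqueness of the representation $[z,u]$ in a geometric realisation; the second is a direct application of \ref{prelem} together with the observation that $\llbracket 0, 1\llbracket$ contains a single vertex. Everything else assembles routinely from previously established results.
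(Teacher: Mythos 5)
Your overall strategy coincides with the paper's: take $Y = c(B)$, use $B \subseteq f(c(B))$ and injectivity of $f_*$ to get $\beta \in \im H(|c(B)| \hookrightarrow |P|)$, then connect $x$ to $y$ by concatenating $\rho(\omega)$ with an edge-path inside the cube of $c(b)$. The identification $c(f_0(x)) = x$ is argued correctly. However, there is one genuine gap in the endpoint bookkeeping: you assert that $c(b')$ is a vertex, where $b' = b_{\sharp}(0,\dots,0)$. This is not true in general. The carrier of a vertex of $Q$ need only satisfy $\deg(c(b')) \geq \deg(b') = 0$, and it can have positive degree: for the subdivision $f\colon |\llbracket 0,1\rrbracket| \to |\llbracket 0,2\rrbracket|$, $t \mapsto 2t$, the vertex $1$ of $\llbracket 0,2\rrbracket$ has carrier the $1$-cube $[0,1]$. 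In such a case your intermediate conclusion $c(b') = c(b)_{\sharp}(0,\dots,0)$ is false (the left side has positive degree, the right side is a vertex), so the step ``the target of $\rho(\omega)$ is $c(b)_{\sharp}(0,\dots,0)$'' is not established by your argument, even though Proposition \ref{rho} tells you the target is $c(b')_{\sharp}(0,\dots,0)$ rather than $c(b')$.

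The conclusion you need is nevertheless true, and the repair is short; it is exactly what the paper does. From Lemma \ref{prelem} you have $c(b') \in c(b)_{\sharp}(\llbracket 0,1 \llbracket^{\otimes \deg(c(b))})$. The graded set $\llbracket 0,1 \llbracket^{\otimes \deg(c(b))}$ is closed under the lower boundary operators $d_i^0$, hence so is its image under $c(b)_{\sharp}$. Since $c(b')_{\sharp}(0,\dots,0)$ is obtained from $c(b')$ by iterating $d_1^0$, it also lies in $c(b)_{\sharp}(\llbracket 0,1 \llbracket^{\otimes \deg(c(b))})$, and being a vertex it must equal $c(b)_{\sharp}(0,\dots,0)$, the image of the unique vertex $(0,\dots,0)$ of $\llbracket 0,1 \llbracket^{\otimes \deg(c(b))}$. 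With this correction your proof agrees with the paper's. (Your first part, deducing $\beta \in \im H(|c(B)| \hookrightarrow |P|)$, is a slightly more explicit version of the paper's one-line assertion and is fine, using that $f$ is a homeomorphism throughout this section.)
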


\begin{proof}
Since $B \subseteq f(c(B))$, we have $f_*(\beta) \in \im H(|f(c(B))| \hookrightarrow |Q|)$. Therefore $\beta \in \im H(|c(B)| \hookrightarrow |P|).$ Consider vertices $x \in X_0$ and $y \in c(B)_0$. Consider $b \in B$ such that $y \in c(b)_{\sharp}(\llbracket 0,1\rrbracket^{\otimes \deg(c(b))})$. Let $\omega$ be a path in $Q$ from $f_0(x)$ to $b_{\sharp}(0,\dots,0)$. Then, by \ref{rho},  $\rho(\omega)$ is a path in $P$ from $c(f_0(x)) = x$ to $c(b_{\sharp}(0,\dots,0))_{\sharp}(0,\dots ,0)$. By \ref{prelem}, $c(b_{\sharp}(0,\dots,0)) \in c(b)_{\sharp}(\llbracket 0,1 \llbracket^ {\otimes \deg(c(b))})$. Since $\llbracket 0,1 \llbracket ^{\otimes \deg(c(b))}$ is closed under the operators $d_i^0$, so is $c(b)_{\sharp}(\llbracket 0,1 \llbracket^ {\otimes \deg(c(b))})$. It follows that $c(b_{\sharp}(0,\dots,0))_{\sharp}(0,\dots ,0) \in c(b)_{\sharp}(\llbracket 0,1 \llbracket^ {\otimes \deg(c(b))})$ and hence that  $$c(b_{\sharp}(0,\dots,0))_{\sharp}(0,\dots ,0) = c(b)_{\sharp}(0,\dots ,0).$$ Let $\nu$ be a path in $P$ from $c(b)_{\sharp}(0,\dots ,0)$ to $y$. Then $\rho(\omega)\cdot \nu$ is a path in $P$ from $x$ to $y$. It follows that $\alpha \nearrow \beta$.
\end{proof}

\begin{theor} \label{main}
For all homology classes $\alpha, \beta \in H(|P|)$, $\alpha \nearrow \beta$ if and only if $f_*(\alpha) \nearrow f_*(\beta)$.
\end{theor}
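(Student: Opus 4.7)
The forward direction is Theorem \ref{morpoint}. So I focus on the converse: suppose $f_*(\alpha) \nearrow f_*(\beta)$, witnessed by precubical subsets $A,B \subseteq Q$. The plan is to produce a precubical subset $X \subseteq P$ with $\alpha \in \im H(|X| \hookrightarrow |P|)$ and paths in $Q$ from every vertex of $f(X)$ to every vertex of $B$, so that Lemma \ref{lemma1} applies.

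First I would shrink $A$ to a finite precubical subset: singular homology has compact supports and each closed cell of $|A|$ has the form $|x_{\sharp}(\llbracket 0,1 \rrbracket^{\otimes n})|$, so any finite subcomplex containing a representing cycle for $f_*(\alpha)$ is the realisation of a finite precubical subset $A_1 \subseteq A$ with $f_*(\alpha) \in \im H(|A_1| \hookrightarrow |Q|)$. Next I enlarge $A_1$ to a finite past-complete precubical subset $A'$ by iterated past-closure: at each stage, for every $x \in c(A_k)$ of positive degree and every $(l_1,\dots,l_n) \in R_x$ with $x_{\flat}(l_1,\dots,l_n) \in A_k$, I adjoin $x_{\flat}(\llbracket 0,l_1 \rrbracket \otimes \cdots \otimes \llbracket 0,l_n \rrbracket)$. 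Since every $x \in c(A_1)$ factors as $c(a)_{\sharp}\circ u_{\sharp}$ for some $a \in A_1$, the entire precubical subset $x_{\sharp}(\llbracket 0,1\rrbracket^{\otimes \deg x})$ lies in $c(A_1)$; Proposition \ref{imcarr}(i) then forces carriers of all elements of $x_{\flat}(R_x)$ to stay in $c(A_1)$, so the whole construction remains inside the finite set $\bigcup_{x \in c(A_1)} x_{\flat}(R_x)$ and stabilises. The fixed point $A'$ is past-complete by construction, contains $A_1$, and still satisfies $f_*(\alpha) \in \im H(|A'| \hookrightarrow |Q|)$.

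Now I apply Lemma \ref{reduction} to $A'$ to extract $\tilde A \subseteq A'$ with $|\tilde A| \hookrightarrow |A'|$ a homotopy equivalence and no element of $P$ broken in $\tilde A$. Then $\im H(|\tilde A| \hookrightarrow |Q|) = \im H(|A'| \hookrightarrow |Q|) \ni f_*(\alpha)$. Setting $X := c(\tilde A)$, Proposition \ref{nobroken} gives $f(X) = \tilde A$, and the restriction of $f$ is a homeomorphism $|X| \to |\tilde A|$. Naturality of the inclusion square under $f$, together with the fact that both vertical maps $f_*\colon H(|X|)\to H(|\tilde A|)$ and $f_*\colon H(|P|)\to H(|Q|)$ are isomorphisms, implies that $\alpha \in \im H(|X| \hookrightarrow |P|)$ if and only if $f_*(\alpha) \in \im H(|\tilde A| \hookrightarrow |Q|)$; the latter holds, so $\alpha$ is supported in $X$.

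It remains to verify the path hypothesis of Lemma \ref{lemma1}. Every vertex $v \in f(X)_0 = \tilde A_0 \subseteq A'_0$ either lies in $A_1$ or was produced at some stage inside a past box $x_{\flat}(\llbracket 0,l_1\rrbracket \otimes \cdots \otimes \llbracket 0,l_n\rrbracket)$; in the second case the box contains an obvious coordinate-increasing path from $v$ to the corner $x_{\flat}(l_1,\dots,l_n)$, which already belongs to the previous stage, so induction on the stage produces a path in $Q$ from $v$ to some vertex of $A_1 \subseteq A$. The original hypothesis on $A$ and $B$ then supplies a path from that vertex to any $b \in B_0$, and concatenation yields the required path. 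Lemma \ref{lemma1} then gives $\alpha \nearrow \beta$. The main obstacle is the past-closure step: one must verify simultaneously that the iteration terminates, produces a precubical subset, is past-complete, and still supports $f_*(\alpha)$, which rests on the carrier-tracking argument via Proposition \ref{imcarr} sketched above.
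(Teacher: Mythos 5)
Your proposal is correct and follows essentially the same route as the paper's proof: the forward direction is Theorem \ref{morpoint}, and for the converse one reduces to a finite $A$, enlarges it to a finite past-complete $A'$ still supporting $f_*(\alpha)$ and still satisfying the path condition towards $B$, applies Lemma \ref{reduction} and Proposition \ref{nobroken} to obtain $\tilde A$ with $f(c(\tilde A)) = \tilde A$, and concludes via Lemma \ref{lemma1}. The only (harmless) divergence is in how $A'$ is produced: you build it bottom-up as the fixed point of an iterated past-closure and then verify the path property by induction on the stages, whereas the paper takes $A'$ to be the largest precubical subset of $f(c(A))$ containing $A$ with the path property and deduces past-completeness from that maximality.
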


\begin{proof}
We only have to show the if part of the statement. Consider homology classes $\alpha, \beta \in H(|P|)$ such that $f_*(\alpha) \nearrow f_*(\beta)$. Let $A$ and $B$ be precubical subsets of $Q$ such that $f_*(\alpha) \in \im H(|A| \hookrightarrow |Q|)$, $f_*(\beta) \in \im H(|B| \hookrightarrow \nolinebreak |Q|)$ and for all vertices $a \in A_0$ and $b \in B_0$ there exists a path in $Q$ from $a$ to $b$. We may suppose that $A$ is finite. Then also $f(c(A))$ is finite. Let $A'$ be the largest precubical subset of $f(c(A))$ such that $A \subseteq A'$  and for all vertices $a \in A'_0$ and $b \in B_0$ there exists a path in $Q$ from $a$ to $b$. Then $A'$ is finite and $f_*(\alpha) \in \im H(|A'| \hookrightarrow |Q|)$. We have $c(A) \subseteq c(A') \subseteq c(f(c(A))) = c(A)$ and hence $c(A') = c(A)$. 

We show that all elements of $c(A')$ are past-complete in $A'$. Suppose that there exists an element $x \in c(A')$ that is not past-complete in $A'$.  Then $\deg(x) = n > 0$ and there exists a vertex $(l_1, \dots , l_n) \in R_x$ such that $x_{\flat}(l_1, \dots, l_n) \in A'$ but $x_{\flat}(\llbracket 0,l_1\rrbracket \otimes \cdots \otimes \llbracket 0, l_n\rrbracket) \not \subseteq A'$. Since $x_{\flat}(l_1, \dots, l_n) \in A'$, for all vertices $a \in x_{\flat}(\llbracket 0,l_1\rrbracket \otimes \cdots \otimes \llbracket 0, l_n\rrbracket)$ and $b \in B$, there exists a path in $Q$ from $a$ to $b$. Since $x_{\flat}(\llbracket 0,l_1\rrbracket \otimes \cdots \otimes \llbracket 0, l_n\rrbracket) \subseteq x_{\flat}(R_x) = f(x_{\sharp}(\llbracket 0,1 \rrbracket ^{\otimes n})) \subseteq f(c(A')) = f(c(A))$, the maximality of $A'$ implies that $A' \cup x_{\flat}(\llbracket 0,l_1\rrbracket \otimes \cdots \otimes \llbracket 0, l_n\rrbracket) \subseteq A'$ and hence that $x_{\flat}(\llbracket 0,l_1\rrbracket \otimes \cdots \otimes \llbracket 0, l_n\rrbracket) \subseteq A'$, a contradiction. It follows that all elements of $c(A')$ are past-complete in $A'$.

By lemma \ref{reduction}, there exists a precubical subset $\tilde A$ of $A'$ such that the inclusion $|\tilde A| \hookrightarrow |A'|$ is a homotopy equivalence and no element of $P$ is broken in $\tilde A$. Since $\tilde A \subseteq A'$, for all vertices $a \in \tilde A_0$ and $b \in B_0$ there exists a path in $Q$ from $a$ to $b$. Since the inclusion $|\tilde A| \hookrightarrow |A'|$ is a homotopy equivalence, $f_*(\alpha) \in \im H(|\tilde A| \hookrightarrow |Q|)$. Since no element of $P$ is broken in $\tilde A$, by propsition \ref{nobroken}, $f(c(\tilde A)) = \tilde A$. It follows that $\alpha \in \im H(|c(\tilde A)| \hookrightarrow |P|)$ and consequently, by lemma \ref{lemma1}, that $\alpha \nearrow \beta$.
\end{proof}

\end{document}